\newtheorem{thm}{Theorem}[section]
\newtheorem{prop}[thm]{Proposition}
\theoremstyle{remark}
\newtheorem{rem}{Remark}[section]
\theoremstyle{remark}
\newtheorem{ex}{Example}[section]
\numberwithin{equation}{section}
\newcommand{\be}{\begin{equation}}  \newcommand{\ee}{\end{equation}}
\newcommand{\nn}{\nonumber}
\begin{document}

\subjclass{Primary 37K10, 60K35.}

\def\C{{\mathbb C}}
\def\R{{\mathbb R}}
\def\F{{\mathcal F}}
\def\T{{\mathcal T}}
\def\E{{\mathcal E}}
\def\Z{{\mathbb Z}}
\def\N{{\mathbb N}}

\def\L{{\mathcal L}}
\def\M{{\mathcal M}}
\def\P{{\mathcal P}}

\def\l{\lambda}
\def\sgn{\mbox{\rm sgn}}
\def\varen{\varepsilon}

\title{Geometric RSK and the Toda lattice}

\author{Neil O'Connell}
\address{Mathematics Institute, University of Warwick, Coventry CV4 7AL, UK}
\email{n.m.o-connell@warwick.ac.uk}

\maketitle

\begin{abstract}
We relate a continuous-time version of the geometric RSK correspondence to the Toda lattice, in a way
which can be viewed as a semi-classical limit of a recent result by the author which relates the 
continuous-time geometric RSK mapping, with Brownian motion as input, to the quantum Toda lattice.
\end{abstract}

\tableofcontents

\section{Introduction}

The geometric RSK correspondence is a geometric lifting of the classical RSK correspondence.
It was introduced by A.N. Kirillov~\cite{ki} and further studied by Noumi and Yamada~\cite{ny}.  
There is also a 
continuous-time version of the geometric RSK mapping, which was introduced in~\cite{oc03} and substantially 
developed in~\cite{bbo1,bbo2} in the context of Littelmann's path model.  In this setting, an important role 
is a played by a mapping $\Pi_n$ (defined in the next section) which takes as input a continuous path 
$\eta(t),\ t \ge 0$ in $\R^n$ with $\eta(0)=0$ and returns a path $\Pi_n\eta(t),\ t>0$, also in $\R^n$.
In the paper~\cite{noc} it was shown that, if $\eta(t)=\sqrt{\epsilon} B(t)+t\l$, where $\l\in\R^n$ and
$B$ is a standard Brownian motion in $\R^n$, then $\Pi_n\eta(t),\ t>0$ is a diffusion process in $\R^n$
with infinitesimal generator given in terms of the Hamiltonian of the open quantum Toda chain with
$n$ particles.  The aim of the present paper is to understand this result from the point of view of the
classical Toda lattice.  

By considering the semiclassical limit ($\epsilon\to0$) of this result, it can be seen (heuristically) from 
a result of Givental~\cite{g} (at least in the case $\l=0$) that, if $\eta(t)=t\l$, 
then $\Pi_n\eta(t),\ t>0$ should define a solution to the classical Toda flow (with opposite sign), and 
indeed this is the case.  We will show it directly in the classical ($\epsilon=0$) setting by considering
the continuous-time geometric RSK mapping with deterministic input.  The approach is very elementary 
and mostly self-contained.  Starting with the definition of
the geometric RSK mapping, we soon arrive at familiar objects in the general theory of the Toda lattice,
thus providing further insight into the results of~\cite{noc} from an integrable systems point 
of view. 

The main conclusion is that there is a precise sense in which
\be\label{equation}
\mbox{classical Toda + noise = quantum Toda.}
\ee
This statement requires some qualification, however.  First, we consider both the classical and quantum
system in imaginary time.  For the classical system this means that the potential has a minus sign,
that is, the Hamiltonian is given by
$$\frac12\sum_{i=1}^n p_i^2 - \sum_{i=1}^{n-1} e^{x_{i+1}-x_i}.$$
For the quantum system, it means we consider normalised Hamiltonian
$$\L_\lambda = -\frac12 \psi_\l(x)^{-1} \left( H+\epsilon\sum_i\l_i^2\right) \psi_\l(x)
= \frac\epsilon2 \Delta + \epsilon \nabla\log \psi_\l \cdot \nabla ,$$
where
$$H=-\epsilon\Delta+\frac2\epsilon\sum_{i=1}^{n-1} e^{x_{i+1}-x_i}$$
is the Hamiltonian of the quantum Toda lattice
and $\psi_\l$ is a particular eigenfunction of $H$ known as a (class one) $GL(n,\R)$-Whittaker function.
Moreover, in the `equation' \eqref{equation}, the noise must be added in a very particular way.
It is not simply a random perturbation of the classical Toda flow, but rather a random perturbation 
of a particular construction of the Toda flow which is closely related to the geometric RSK correspondence.
Within this construction, the perturbation is simply: 
\begin{center} {\em Add noise to the constants of motion}.\end{center}
It will be interesting to investigate to what extent this relation extends to other integrable systems.

Along the way we observe the following curious fact.  Consideration of the geometric
RSK mapping with Brownian motion as input gives rise to a particular stochastic dynamics on triangles
(the analogue of Gelfand-Tsetlin patterns in this setting), as discussed in~\cite{noc}.  In that paper, 
another quite different stochastic dynamics on triangles was also considered and shown to have the 
same fixed-time distributions, and to bear the same relation to the quantum Toda lattice.  This latter 
dynamics can be interpreted as a geometric lifting of Warren's process~\cite{w}, which in turn can be 
interpreted as a continuous version of a {\em shuffling algorithm} which has played an important role in the 
random tilings literature~\cite{n}.  Similar dynamics on Gelfand-Tsetlin patterns, constructed using a
general prescription of Diaconis and Fill~\cite{df}, have been studied by Borodin and co-workers, see 
for example~\cite{bc,bf}.   It turns out that, in the semi-classical limit we consider in this paper, 
both the `RSK type' and `shuffling type' of dynamics are equivalent.

The outline of the paper is as follows.  In the next section we recall some relevant background material on 
factorisations of matrices.  In Section \ref{s-grsk}, we define and recall some basic properties of the continuous-time 
geometric RSK mapping.  In Section \ref{s-wfqt}, we recall some facts about Whittaker functions and the quantum
Toda lattice, which continue to play a role in the classical setting.  In Section \ref{grsk-bm}, we recall some of 
the main results of~\cite{noc} and in Section \ref{scl} we briefly outline, at a heuristic level, what happens to 
these results in the semiclassical limit.  In Section \ref{sec-toda}, we recall some basic definitions and properties of 
the opposite sign Toda lattice and, in Section \ref{flows}, we formulate and prove the main results of the paper.

\bigskip

\noindent{\bf Acknowledgements.}  Thanks to Mark Adler, Percy Deift, Nick Ercolani, Govind Menon
and Pierre van Moerbeke for helpful discussions.  This research was supported in part by 
EPSRC grant number EP/I014829/1. 

\section{Preliminaries}

Let $G=GL(n,\C)$ and denote by $B, N$ (resp. $B_-,N_-$) the subgroups of upper (resp. lower) triangular and 
uni-triangular matrices in $G$.  Throughout this paper, an important role will be played by the totally positive part, 
which we will denote by $\P$, of the double Bruhat cell $B\cap B_-\bar w_0 B_-$, where $\bar w_0$ is a representative 
in $G$ of the longest element $w_0\in S_n$, as in~\cite{fz,bfz}.  Concretely,  
\be\label{P}
\P=\{b\in B: \Delta^m_k(b)> 0,\ 1\le k\le m\le n\}
\ee
where
\be\label{delta}
\Delta^m_k(b)=\det \Big[ b_{ij}\Big]_{1\le i\le k,\ m-k+1\le j\le m}.
\ee
In the following we adopt the convention that $\Delta^m_0(b)=1$.
The quantities $\Delta^m_k(b),\ 1\le k\le m\le n$ uniquely determine $b\in\P$, as follows.
(See, for example,~\cite{bfz} or \cite[Proposition 1.5]{ny}.)
For $a\in\C^k$, define
$$\epsilon^{k}(a)=\begin{pmatrix} a_1 & 1  &0  &\dots& 0\\
0 & a_2 & 1  &\dots& 0\\
\vdots &  & \ddots &  & \vdots\\
&&&a_{k-1} &1\\
0&&\dots&&a_k \end{pmatrix},$$
and denote by $I_k$ the identity matrix of dimension $k$.
For $1\le m\le n$ and $w\in\C^{n-m+1}$, define
$$E_m(w)=\begin{pmatrix} I_{m-1} & 0 \\ 0 & \epsilon^{n-m+1}(w) \end{pmatrix}.$$
\begin{prop}\label{ny} 
Each element $b\in \P$ can be represented uniquely as a product of the form
$$b=E_1(w^1)\dots E_n(w^n)$$ where $w^m\in(\R_{> 0})^{n-m+1}$ for each $m$.
The $w^m$ are given by
$$w^m_1=\Delta^m_1(b);\qquad w^m_i=\frac{\Delta^{m+i-1}_i(b)\Delta^{m+i-2}_{i-2}(b)}
{\Delta^{m+i-1}_{i-1}(b)\Delta^{m+i-2}_{i-1}(b)},\qquad 1<i\le n-m+1.$$
\end{prop}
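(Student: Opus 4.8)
The plan is to establish the factorisation and the formula for the $w^m_i$ by a double induction, working outward from the structure of the matrices $E_m(w)$. First I would observe that the factorisation $b = E_1(w^1)\cdots E_n(w^n)$ is, once one knows it exists, essentially forced: the matrix $E_n(w^n)$ differs from the identity only in the $(n,n)$ entry (which it sets to $w^n_1$), $E_{n-1}(w^{n-1})$ acts only on the last two rows/columns, and so on, so the product is a specific bi-diagonal-type triangular matrix. The uniqueness statement should follow by peeling off factors from the right: given $b\in\P$, the condition $\Delta^n_1(b) = b_{1n}$ together with the shape of the $E_m$ pins down $w^1_1 = \Delta^n_1(b)$ first, or alternatively one peels from the left using the top row. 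I would set this up carefully so that at each stage the remaining matrix still lies in a totally positive cell of the appropriate (smaller) double Bruhat structure, so the induction hypothesis applies.

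The core of the argument is the identification of the minors $\Delta^m_k(b)$ with the claimed ratios of $w$'s. Here I would compute the minor $\Delta^{m}_k$ directly on the product $E_1(w^1)\cdots E_n(w^n)$. The key point is that the matrix $\epsilon^k(a)$ is chosen precisely so that its leading minors and its flag minors have clean expressions; in particular, a solid (contiguous) minor of a product of such elementary factors localises to only a few of the factors. I expect that $\Delta^{m+i-1}_i(b)$ will be expressible as a sum over lattice paths / non-intersecting paths through the bi-diagonal structure (a Lindström–Gessel–Viennot type expansion), and that the claimed formula
$$w^m_i=\frac{\Delta^{m+i-1}_i(b)\,\Delta^{m+i-2}_{i-2}(b)}{\Delta^{m+i-1}_{i-1}(b)\,\Delta^{m+i-2}_{i-1}(b)}$$
emerges by isolating the unique path that passes through the relevant entry and dividing out the contributions common to numerator and denominator. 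The base cases $w^m_1 = \Delta^m_1(b)$ and the convention $\Delta^m_0(b)=1$ should fall out of this expansion immediately.

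The main obstacle, I anticipate, is bookkeeping: keeping track of which factors $E_j(w^j)$ contribute to a given solid minor $\Delta^{m+i-1}_i$, and verifying that the path expansion collapses to exactly the stated ratio rather than something more complicated. One clean way to control this is to use the known three-term (Dodgson/Desnanot–Jacobi) relations among the solid minors of a totally positive matrix, which relate $\Delta^{m+i-1}_i$, $\Delta^{m+i-1}_{i-1}$, $\Delta^{m+i-2}_{i-1}$ and $\Delta^{m+i-2}_{i-2}$; combined with an inductive identification of the "new" data introduced by the factor carrying $w^m_i$, this should give the formula without a brute-force determinant computation. Since, as the text notes, this is standard (e.g.\ \cite{bfz} or \cite[Proposition 1.5]{ny}), I would ultimately be content to reproduce the short version: exhibit the factorisation, invoke total positivity to justify that all the minors appearing are nonzero (so the ratios make sense), and then verify the formula on the normal form $E_1(w^1)\cdots E_n(w^n)$ by the path/minor expansion described above.
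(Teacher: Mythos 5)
The paper does not actually prove Proposition \ref{ny}: it is stated with a pointer to \cite{bfz} and \cite[Proposition 1.5]{ny}, so there is no in-paper argument to compare against. Your plan is essentially the standard proof from those references: view $E_1(w^1)\cdots E_n(w^n)$ as a planar network whose solid minors $\Delta^m_k$ expand, by Lindstr\"om--Gessel--Viennot, as subtraction-free polynomials in the $w^m_i$ (hence positive when all $w^m_i>0$), and then invert to obtain the stated ratios. That route is correct and is the one the cited sources take.

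Two cautions. First, an index slip: $\Delta^n_1(b)=b_{1n}$ pins down $w^n_1$, the parameter of the \emph{last} factor $E_n(w^n)$, not $w^1_1$. Indeed the formula $w^m_1=\Delta^m_1(b)=b_{1m}$ says precisely that the top row of $b$ is $(w^1_1,\ldots,w^n_1)$, so it is $w^1_1=b_{11}=\Delta^1_1(b)$ that comes first when peeling from the left; check $n=2$, where $E_1(w^1)E_2(w^2)=\left(\begin{smallmatrix} w^1_1 & w^2_1 \\ 0 & w^1_2 w^2_1\end{smallmatrix}\right)$. Second, as written your argument gives one inclusion (products with all $w^m_i>0$ lie in $\P$, and the $w$'s are recoverable from the minors by the stated ratios), but the existence half --- that \emph{every} $b\in\P$ arises as such a product --- requires showing that the product built from the $w$'s defined by those ratios actually reproduces $b$, i.e.\ that the $n(n+1)/2$ flag minors $\Delta^m_k$ form a coordinate system on the cell. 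That verification is exactly where the Desnanot--Jacobi/path bookkeeping you gesture at must be carried out; it is routine but it is the substance of the proposition, so the sketch is a viable plan rather than a complete proof.
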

This provides a natural (Gelfand-Tsetlin) parameterization of $\P$ by the set of {\em triangles} 
\be\label{T}
\T=\{X=(x_i^m)\in\R^{n(n+1)/2}:\ 1\le i\le m\le n\},
\ee
obtained by setting
\be\label{det1}
x^m_1+\cdots+x^m_k=\log \Delta^m_k(b) .
\ee
We will denote the corresponding bijective mapping by $f:\P\to\T$.
Note that if we write $X=(x^m_i)=f(b)$, the $w^m$ of Proposition~\ref{ny} are given, in terms of the $x^m_i$, 
by
$$w^m_1=e^{x^m_1};\qquad w^m_i=e^{x^{m+i-1}_i-x^{m+i-2}_{i-1}},\qquad 1<i\le n-m+1.$$

The Weyl group associated with $G$ is the symmetric group $S_n$.  
Each element $w\in S_n$ has a representative $\bar w\in G$ defined as follows.  
Denote the standard generators for
$\mathfrak{gl}_n$ by $h_i$, $e_i$ and $f_i$.  For example, for $n=3$,
$$h_1=\begin{pmatrix}
1& 0 &0\\
0 & 0 & 0 \\
0 & 0 & 0 \end{pmatrix},\qquad 
h_2 =\begin{pmatrix}
0& 0 &0\\
0 & 1 & 0 \\
0 & 0 & 0 \end{pmatrix},\qquad 
h_3 = \begin{pmatrix}
0& 0 &0\\
0 & 0 & 0 \\
0 & 0 & 1 \end{pmatrix},
$$
$$e_1=\begin{pmatrix}
0& 1 &0\\
0 & 0 & 0 \\
0 & 0 & 0 \end{pmatrix},\quad 
e_2 =\begin{pmatrix}
0& 0 &0\\
0 & 0 & 1 \\
0 & 0 & 0 \end{pmatrix},\quad f_1=\begin{pmatrix}
0& 0 &0\\
1 & 0 & 0 \\
0 & 0 & 0 \end{pmatrix},\quad 
f_2 =\begin{pmatrix}
0& 0 &0\\
0 & 0 & 0 \\
0 & 1 & 0 \end{pmatrix}.
$$

For adjacent transpositions $s_i=(i,i+1)$, define 
$$\bar{s}_i=\exp(-e_i)\exp(f_i)\exp(-e_i)=(I-e_i)(I+f_i)(I-e_i).$$  
In other words, 
$\bar s_i=\varphi_i\begin{pmatrix}0&-1\\1&0\end{pmatrix}$
where $\varphi_i$ is the natural embedding of $SL(2)$ into $GL(n)$
given by $h_i$, $e_i$ and $f_i$.  For example, when $n=3$,
$$\bar s_1 = \begin{pmatrix}
0& -1 &0\\
1 & 0 & 0 \\
0 & 0 & 1 \end{pmatrix},\qquad 
\bar s_2 =
\begin{pmatrix}
1& 0 &0\\
0 & 0 & -1 \\
0 & 1 &0 \end{pmatrix}.$$
Now let $w=s_{i_1}\ldots s_{i_r}$ be a reduced decomposition and define
$\bar{w}=\bar s_{i_1}\ldots \bar s_{i_r}$.  Note that $\overline{uv}=\bar u\bar v$
whenever $l(uv)=l(u)+l(v)$.
Denote the longest element of $S_n$ by
$$w_0=\left(\begin{array}{cccc} 1 & 2 & \cdots & n \\ n& n-1 & \cdots & 1\end{array}\right) .$$ 
For $n=2$, $ w_0=s_1$ and $$\bar  w_0=\bar s_1 =  \begin{pmatrix}
0 & -1  \\
1 & 0 \end{pmatrix} .$$
For $n=3$, $ w_0=s_1s_2s_1=s_2s_1s_2$ is represented by
$$
\bar  w_0= \bar s_1\bar s_2\bar s_1=\bar s_2\bar s_1\bar s_2 = \begin{pmatrix}
0& 0 &1\\
0 & -1 & 0 \\
1 & 0 & 0 \end{pmatrix}  .$$

Denote the elementary lower uni-triangular Jacobi matrices by 
$$l_i(a)=I_n+a f_i, \qquad 1\le i\le n-1.$$
These matrices play a central role in parameterising of the set $(N_-)_{>0}$ of 
totally positive lower triangular matrices, see for example~\cite{bfz}.
For $u\in\C^m$, $1\le m< n$, define
$$L_m(u)=l_m(u_m)l_{m-1}(u_{m-1})\ldots l_1(u_1).$$
\begin{prop}\label{lus}  Each $L\in (N_-)_{>0}$ can be written uniquely as a product
$$L=L_1(u^1) L_2(u^2) \dots L_{n-1}(u^{n-1}) ,$$
where $u^m\in (\R_{>0})^m$ for each $m$.
\end{prop}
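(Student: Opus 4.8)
The plan is to argue by induction on $n$, peeling off the last row at each step. A shortcut is available at the outset: reading the elementary Jacobi factors of $L_1(u^1)\cdots L_{n-1}(u^{n-1})$ from left to right yields the word $s_1\,(s_2s_1)\,(s_3s_2s_1)\cdots(s_{n-1}\cdots s_1)$, which has length $\binom{n}{2}=\ell(w_0)$ and represents $w_0$; hence the statement is the special case, for this particular reduced word, of the general fact recalled in~\cite{bfz} that for each reduced word $s_{i_1}\cdots s_{i_N}$ of $w_0$ the map $(t_1,\dots,t_N)\mapsto l_{i_1}(t_1)\cdots l_{i_N}(t_N)$ is a bijection from $(\R_{>0})^N$ onto $(N_-)_{>0}$. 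What follows is a self-contained proof of this special case.

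First I would record the shape of the building blocks: using $E_{ij}E_{kl}=\delta_{jk}E_{il}$ one checks that, for $u\in\C^m$,
$$L_m(u)=I_n+\sum_{1\le j<i\le m+1}\big(u_ju_{j+1}\cdots u_{i-1}\big)\,E_{ij},$$
so $L_m(u)$ acts trivially on $e_{m+2},\dots,e_n$ and fixes the $(m+1)$st column. Consequently $P:=L_1(u^1)\cdots L_{n-2}(u^{n-2})$ has the block form $\mathrm{diag}(\hat P,1)$, and, since each $l_i(t)$ with $i\le n-2$ is the corresponding $(n-1)$-dimensional elementary matrix with a trivial last row and column appended, $\hat P$ is exactly the analogous product in $GL(n-1,\C)$; by the inductive hypothesis $\hat P$ ranges over $(N_-)_{>0}$ in $GL(n-1,\C)$ bijectively as $(u^1,\dots,u^{n-2})$ ranges over the product of positive orthants. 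Writing $L=P\,L_{n-1}(u^{n-1})$ and using that $P$ has last row $e_n^T$, the last row of $L$ coincides with that of $L_{n-1}(u^{n-1})$, namely with $\big(u^{n-1}_1\cdots u^{n-1}_{n-1},\ u^{n-1}_2\cdots u^{n-1}_{n-1},\ \dots,\ u^{n-1}_{n-1},\ 1\big)$. Hence $L$ forces $u^{n-1}_j=L_{n,j}/L_{n,j+1}$ for $1\le j\le n-1$ (with $L_{n,n}:=1$), and these ratios are strictly positive when $L\in(N_-)_{>0}$, since then $L_{n,j}>0$ for all $j$. Any factorisation of $L$ of the asserted form must therefore have $L_{n-1}(u^{n-1})$, with this $u^{n-1}$, as its rightmost factor, and the remaining factors multiply to $L\,L_{n-1}(u^{n-1})^{-1}$, so uniqueness follows from the inductive hypothesis.

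For existence it remains to chop and to check that total positivity is preserved. A short computation gives $L_{n-1}(u^{n-1})^{-1}=I_n-\sum_{i=1}^{n-1}u^{n-1}_i\,E_{i+1,i}$, so $R:=L\,L_{n-1}(u^{n-1})^{-1}$ is obtained from $L$ by the column operations that replace column $j$ by (column $j$)$\,-\,u^{n-1}_j\,$(column $j+1$), $1\le j\le n-1$, acting on the original columns of $L$. By the choice of $u^{n-1}$ the last row of $R$ becomes $e_n^T$ and its last column stays $e_n$, so $R=\mathrm{diag}(\hat L,1)$ with $\hat L\in N_-$ in $GL(n-1,\C)$; granting $\hat L\in(N_-)_{>0}$, the inductive hypothesis writes $\hat L=L_1(u^1)\cdots L_{n-2}(u^{n-2})$ with all $u^m$ positive, which together with the unique $u^{n-1}$ found above completes the induction, the base case $n=1$ being trivial. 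The substantive point, and the step I expect to be the main obstacle, is precisely that $\hat L$ is again totally positive. To see this one would expand a general minor $\det\hat L[I,J]=\det(LM)[I,J]$ (where $M=L_{n-1}(u^{n-1})^{-1}$ and $I,J\subseteq\{1,\dots,n-1\}$) by the Cauchy--Binet formula; since $M$ is lower bidiagonal, only the column sets $K$ obtained from $J$ by raising some of its entries by one contribute, each with a sign and a monomial in the $u^{n-1}_j$, and, after substituting $u^{n-1}_j=L_{n,j}/L_{n,j+1}$ and using the three-term Pl\"ucker relations among the minors of $L$ (which now involve row $n$), the resulting alternating sum is seen to be positive. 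It is exactly here that total positivity of $L$, as opposed to mere total nonnegativity, enters; alternatively one may at this point simply quote the corresponding statement from~\cite{bfz}.
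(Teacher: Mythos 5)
Your proposal is correct, and in fact the paper offers no proof of this proposition at all: it is stated as a known fact with a pointer to~\cite{bfz}. Your opening paragraph supplies exactly the justification the paper intends. Reading the factors of $L_1(u^1)\cdots L_{n-1}(u^{n-1})$ left to right does give the word $s_1\,(s_2s_1)\cdots(s_{n-1}\cdots s_1)$, which is reduced of length $\binom{n}{2}$ and represents $w_0$ (the paper itself uses this very reduced word later, when identifying $\Pi_n$ with $P_{w_0}$), so the proposition is the instance of the Berenstein--Fomin--Zelevinsky parameterization of $(N_-)_{>0}$ associated with that reduced word. The self-contained induction you add is structurally sound and goes beyond what the paper does: the formula for the entries of $L_m(u)$, the identification $u^{n-1}_j=L_{n,j}/L_{n,j+1}$ from the last row, the uniqueness argument, and the reduction of existence to showing $\hat L\in(N_-)_{>0}$ are all right. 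The one step you leave incomplete is, as you say, the substantive one: the Cauchy--Binet expansion of $\det\hat L[I,J]$ produces an alternating sum whose positivity is asserted rather than derived, and getting it from the Pl\"ucker relations requires a genuine argument (it is essentially the statement that the chamber minors for the truncated reduced word are positive). So your self-contained version still imports the crucial total-positivity-preservation fact from~\cite{bfz} at the end, which is an honest and acceptable place to cite, but you should not present the Cauchy--Binet sketch as if it closes the gap on its own.
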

The next proposition is due to Berenstein, Fomin and Zelevinsky~\cite{bfz}.
\begin{prop}\label{umi} Let $b\in\P$.  Then $b \bar  w_0$ has a Gauss decomposition 
$b \bar  w_0=LDU$ where 
$$D_{ii}=\frac{\Delta^n_i(b)}{\Delta^n_{i-1}(b)},\qquad 1\le i\le n,$$
and $L\in (N_-)_{>0}$ is given by
$$L=L_1(u^1) L_2(u^2) \dots L_{n-1}(u^{n-1}) ,$$
$$u^m_i=\frac{\Delta^m_{i-1}(b)\Delta^{m+1}_{i+1}(b)}{\Delta^m_i(b)\Delta^{m+1}_i(b)},\qquad 1\le i\le m<n.$$
\end{prop}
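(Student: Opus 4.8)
The plan is to reduce the statement to the already-established Gauss decomposition theory by writing $b$ in the canonical Chevalley-type factorization provided by Proposition \ref{ny} and computing directly how right multiplication by $\bar w_0$ interacts with it. First I would recall the basic fact that, for a matrix $g$ admitting a Gauss (LDU) decomposition $g=LDU$ with $L\in N_-$, $D$ diagonal, $U\in N$, the entries of $D$ are ratios of principal minors: $D_{ii}=\delta_i(g)/\delta_{i-1}(g)$, where $\delta_i$ is the $i\times i$ principal (top-left) minor. The point is that the principal minors of $b\bar w_0$ are, by the Cauchy--Binet formula together with the explicit permutation action of $\bar w_0$ on columns, exactly the minors $\Delta^n_i(b)$ appearing in \eqref{delta}: right multiplication by $\bar w_0$ reverses the order of the columns (up to signs), so the top-left $i\times i$ minor of $b\bar w_0$ picks out rows $1,\dots,i$ and the \emph{last} $i$ columns of $b$, which is precisely $\Delta^n_i(b)$ up to a sign that one checks is positive. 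This yields the formula for $D_{ii}$ immediately.

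For the lower-triangular factor $L$, the approach is to use the parameterization in Proposition \ref{lus} and the fact that $b\mapsto L$ is a birational map from $\P$ to $(N_-)_{>0}$; since both sides are parameterized by positive coordinates, it suffices to identify the $u^m_i$. Here I would exploit the \emph{minors} of $b\bar w_0$ rather than attempt a brute-force matrix multiplication: the coordinates $u^m_i$ of an element $L=L_1(u^1)\cdots L_{n-1}(u^{n-1})\in(N_-)_{>0}$ can themselves be written as ratios of certain minors of $L$ (this is the standard Chamber Ansatz / factorization formula of Berenstein--Fomin--Zelevinsky, which one may quote from \cite{bfz}), and the relevant minors of $L$ coincide with the corresponding minors of $b\bar w_0$ because $D$ and $U$ are upper triangular and so do not affect the flag minors that detect $L$. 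Translating the minors of $b\bar w_0$ back into minors of $b$ via the column-reversal induced by $\bar w_0$ then produces the stated expression $u^m_i=\Delta^m_{i-1}(b)\Delta^{m+1}_{i+1}(b)/(\Delta^m_i(b)\Delta^{m+1}_i(b))$.

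An alternative, more hands-on route, which may in fact be what the authors intend given the elementary tone of the paper, is to verify the identity by induction on $n$: write $b=E_1(w^1)\cdots E_n(w^n)$ as in Proposition \ref{ny}, peel off the first factor $E_1(w^1)$, which affects only the first row, and relate $b\bar w_0$ to $b'\bar w_0'$ where $b'\in\P$ sits in $GL(n-1)$ and $\bar w_0'$ is the longest element there; the embedding $\bar s_i=\varphi_i\!\begin{pmatrix}0&-1\\1&0\end{pmatrix}$ and the braid/length-additivity relation $\overline{uv}=\bar u\bar v$ make this bookkeeping manageable. One then checks that the inductive step reproduces exactly one more factor $L_{n-1}(u^{n-1})$ with the claimed coordinates, using the minor identities \eqref{det1} to keep track of the $\Delta^m_k(b)$.

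The main obstacle, in either approach, will be the sign bookkeeping associated with $\bar w_0$: the representative $\bar w_0$ is not the bare permutation matrix but carries signs coming from the $\bar s_i=(I-e_i)(I+f_i)(I-e_i)$ convention, and one must check carefully that all the relevant minors of $b\bar w_0$ come out \emph{positive} (so that $L\in(N_-)_{>0}$ genuinely, with positive $u^m_i$, and $D$ has positive diagonal) rather than merely nonzero. This is precisely where total positivity of $b\in\P$ is used, and it is the step I would write out in full detail; the rest is a routine, if slightly lengthy, minor computation that I would not grind through explicitly.
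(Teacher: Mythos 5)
The paper does not actually prove Proposition~\ref{umi}: it is stated as a known result and simply attributed to Berenstein, Fomin and Zelevinsky with a citation to \cite{bfz}, so there is no in-paper argument to compare yours against. Your outline is a reasonable reconstruction. The first half is essentially complete: the leading principal $i\times i$ minor of $b\bar w_0$ equals $\Delta^n_i(b)$ exactly (the signs contributed by the $-1$ entries of $\bar w_0$ cancel against the sign of the column reversal, as one checks for $n=2,3$ and in general), the Gauss decomposition exists because these minors are strictly positive for $b\in\P$, and the $D_{ii}$ formula is then the standard expression of the diagonal factor as ratios of leading principal minors.

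The second half contains one step that is stated imprecisely and would need repair before you write it out. Writing $\Delta_{I,J}$ for the minor with row set $I$ and column set $J$, it is \emph{not} true that the relevant minors of $L$ "coincide" with those of $b\bar w_0$: what holds is $\Delta_{I,[1,k]}(LDU)=\Delta_{I,[1,k]}(L)\,D_{11}\cdots D_{kk}$ for \emph{initial} column segments, while for non-initial column sets the factor $U$ genuinely enters. So you must (i) express the $u^m_i$ as ratios of minors of $L$ with initial column sets only --- this is possible for the staircase factorization used here; e.g.\ for $n=3$ one finds $u^2_2=\Delta_{\{1,3\},\{1,2\}}(L)$, $u^1_1=\Delta_{\{2,3\},\{1,2\}}(L)/\Delta_{\{1,3\},\{1,2\}}(L)$ and $u^2_1=\Delta_{\{3\},\{1\}}(L)/\Delta_{\{1,3\},\{1,2\}}(L)$ --- and (ii) track the factors $D_{11}\cdots D_{kk}$, which cancel only when numerator and denominator share the same column set; where they do not, the leftover $D_{jj}$ are themselves ratios of the $\Delta^n_i(b)$ and must be absorbed into the final formula (one checks, e.g., that this is exactly how $\Delta^{m+1}_{i+1}/\Delta^{m+1}_i$ arises). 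Be aware also that the Chamber Ansatz of \cite{bfz} in its literal form expresses factorization parameters through minors of a \emph{twisted} matrix rather than of $L$ itself, so either use the twist correctly or derive the direct minor formulas above by hand from the products $l_i(a)$. With these repairs the plan goes through; your alternative induction peeling off $E_1(w^1)$ is also viable and is closest in spirit to how the paper proves Proposition~\ref{sr}.
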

Note that, if $X=(x^m_i)=f(b)$, then the $D_{ii}$ and $u^m_i$ of Proposition~\ref{umi} are given by
$$D_{ii}=e^{x^n_i},\qquad 1\le i\le n$$
and
$$u^m_i=e^{x^{m+1}_{i+1}-x^{m}_i},\qquad 1\le i\le m<n.$$

\section{Geometric RSK in continuous time}\label{s-grsk}

In this section we recall the definition and some basic properties of the continuous-time 
geometric RSK mapping.  Many of the results of this section are essentially contained in the
papers~\cite{bbo1,bbo2}, see also~\cite{noc,noc1}.  For completeness
we include direct proofs of all the main statements, which are adapted to the present setting.

Let $\eta :[0,\infty)\to\R^n$ be a continuous path with $\eta(0)=0$.
Denote the coordinates of $\eta$ by $\eta_1,\ldots,\eta_n$, so that
$$\eta(t)=(\eta_1(t),\ldots,\eta_n(t)),\qquad t>0.$$
For $1\le i<j\le n$, set
$$\Omega_{ij}(t)=\{0<s_{j-1}<\ldots < s_i< t\}.$$
In the following, it will be helpful to think of elements $\phi\in\Omega_{ij}(t)$ as `down-right paths' 
in the semi-lattice $\R\times\Z$ starting at $(0,j)$ and ending at $(t,i)$, as shown in Figure 1. 
\begin{figure}
\begin{center}
\begin{tikzpicture}[scale=.7]
\draw [help lines] (-1,-1) -- (7,-1); 
\draw [help lines] (-1,0) -- (7,0); 
\draw [help lines] (-1,1) -- (7,1); 
\draw [help lines] (-1,2) -- (7,2); 
\draw [help lines] (-1,3) -- (7,3); 
\draw [help lines] (-1,4) -- (7,4); 
\draw [help lines] (-1,5) -- (7,5); 

\node at (-1,-1.5) {$0$};
\node at (7,-1.5) {$t$};

\draw [very thick] (-1,4)--(0.5,4);
\draw [dotted] (.5,4)--(.5,-1);
\node at (.5,-1.5) {$s_{j-1}$};

\draw [very thick] (0.5,3)--(1.5,3);
\draw [dotted] (1.5,3)--(1.5,-1);
\node at (1.5,-1.5) {$s_{j-2}$};

\draw [very thick] (1.5,2)--(3.5,2);
\draw [dotted] (3.5,2)--(3.5,-1);
\node at (3.5,-1.5) {$s_{i+1}$};

\draw [very thick] (3.5,1)--(5,1);
\draw [dotted] (5,1)--(5,-1);
\node at (5,-1.5) {$s_{i}$};

\draw [very thick] (5,0)--(7,0);

\node at (-1.5,4) {$j$};
\node at (7.5,0) {$i$};

\end{tikzpicture}
\end{center}
\caption{A down-right path $\phi=(s_{j-1},\ldots,s_i)\in\Omega_{ij}(t)$.}
\end{figure}
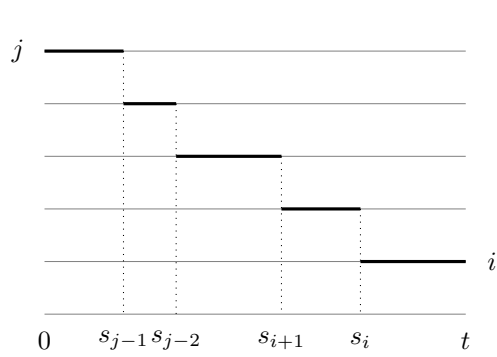
Write $d\phi=ds_i\cdots ds_{j-1}$ for the Euclidean measure
on $\Omega_{ij}(t)$.  For $\phi=(s_{j-1},\ldots,s_i)\in\Omega_{ij}(t)$, we define
$$E_\eta(\phi)=\eta_j(s_{j-1})+\eta_{j-1}(s_{j-2})-\eta_{j-1}(s_{j-1})+\cdots+\eta_i(t)-\eta_i(s_i).$$
For $1\le i<j\le n$ and $t\ge 0$, set
\be\label{def-b}
b_{ii}(t)=e^{\eta_i(t)},\qquad b_{ij}(t)=\int_{\Omega_{ij}(t)} e^{E_\eta(\phi)} d\phi.
\ee
Setting $b_{ij}=0$ for $i>j$, this defines a path in the subgroup $B$ of upper triangular matrices in $GL(n,\C)$.  
If $\eta$ is smooth, then $b(t)=(b_{ij}(t))$, $t\ge 0$, satisfies the evolution equation
\be\label{bdot-eta}
\dot b = \epsilon(\dot\eta) b,
\ee
with initial condition $b(0)=I_n$, where $\epsilon(\l)=\epsilon_\l$ is defined for $\l\in\C^n$ by
\be\epsilon_\l=\begin{pmatrix} \l_1 & 1  &0  &\dots& 0\\
0 & \l_2 & 1  &\dots& 0\\
\vdots &  & \ddots &  & \vdots\\
&&&\l_{n-1} &1\\
0&&\dots&&\l_n \end{pmatrix}.\ee
For $n=1$ we define $\epsilon_\l=\l_1$.

The next proposition is a special case of \cite[Proposition 3.11]{bbo1}.
It shows that $b(t)\in\P$ for each $t>0$ and also makes clear the connection with
Kirillov's original definition of the geometric RSK mapping in terms on non-intersecting lattice paths.
For completeness we include a direct proof which is adapted to the present setting.
\begin{prop}\label{kmg}
\be\label{kmgf}
\Delta^m_k(b(t)) = \int_{\Omega^m_k(t)} e^{E_\eta(\phi_1)+\cdots+E_\eta(\phi_k)}d\phi_1\ldots d\phi_k,
\ee
where the integral is with respect to the Euclidean measure on the set $\Omega^m_k(t)$ of 
$k$-tuples of non-intersecting down-right paths $\phi_1,\ldots,\phi_k$ starting at $(0,m-k+1),\ldots,(0,m)$,
respectively, and ending at $(t,1),\ldots,(t,k)$.  
\end{prop}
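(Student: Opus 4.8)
The plan is to prove the identity \eqref{kmgf} by induction on $t$ via the evolution equation \eqref{bdot-eta}, or equivalently by a direct combinatorial manipulation of the iterated integrals defining the minors $\Delta^m_k$. I would first treat the case $k=1$, where \eqref{kmgf} reduces essentially to the definition \eqref{def-b}: the minor $\Delta^m_1(b(t)) = b_{1m}(t)$ is by definition the single integral $\int_{\Omega_{1m}(t)} e^{E_\eta(\phi)}d\phi$, and a down-right path from $(0,m)$ to $(t,1)$ is exactly an element of $\Omega_{1m}(t)$, so the $k=1$ case is immediate. For general $k$, the approach is to recognise the $k\times k$ determinant $\Delta^m_k(b(t)) = \det[b_{ij}(t)]_{1\le i\le k,\ m-k+1\le j\le m}$ through the Lindström–Gessel–Viennot lemma: each entry $b_{ij}(t)$ is a weighted sum over single down-right paths from $(0,j)$ to $(t,i)$, with multiplicative weight $e^{E_\eta(\phi)}$, and the determinant of the path-weight matrix picks out precisely the families of pairwise non-intersecting paths. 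The indexing is designed so that the sources are at heights $m-k+1,\ldots,m$ and the sinks at heights $1,\ldots,k$, and a non-crossing family necessarily connects source at height $m-k+\ell$ to sink at height $\ell$, matching $\Omega^m_k(t)$ as defined.

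The key steps, in order, are: (i) verify that the weight $e^{E_\eta(\phi)}$ on a down-right path factors multiplicatively over the horizontal segments of $\phi$, so that the entries $b_{ij}(t)$ are genuine ``path partition functions'' in the sense required by LGV; (ii) check the non-crossing combinatorics, namely that in the planar acyclic setting of down-right paths in the strip $[0,t]\times\Z$, the only permutation $\sigma$ contributing a non-vanishing term to $\sum_\sigma \operatorname{sgn}(\sigma)\prod_i (\text{path sum from source }\sigma(i)\text{ to sink }i)$ is the identity — equivalently that any two non-crossing paths have sources and sinks in the same order — so that all cross terms cancel and only families of \emph{disjoint} paths survive; (iii) identify the resulting sum of products of weights over disjoint families with the integral over $\Omega^m_k(t)$ against Euclidean measure. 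A subtlety here is that the paths live in $\R\times\Z$ rather than on a discrete lattice, so ``non-intersecting'' must be interpreted as paths with disjoint images (or at least disjoint horizontal levels at each time), and the combinatorial LGV sign-reversing involution must be replaced by its continuum analogue, which works identically: a Lebesgue-null set of coincidence configurations is discarded and the switching argument at the first intersection time is carried out measure-theoretically.

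Alternatively — and this is probably the cleaner route to write up in the present smooth/analytic setting — I would differentiate. Using \eqref{bdot-eta}, one computes $\frac{d}{dt}\Delta^m_k(b(t))$ by the derivative-of-a-determinant formula and the bidiagonal structure of $\epsilon(\dot\eta)$; the claim is that this matches $\frac{d}{dt}$ of the right-hand side of \eqref{kmgf}, where differentiating the $t$-dependent integral produces a boundary term corresponding to the path $\phi_k$ having just arrived at height $k$ (the innermost variable $s$ hitting $t$) together with a bulk term from the $\dot\eta$ factors in $E_\eta$. One then checks both sides satisfy the same first-order ODE with the same initial condition at $t=0$ (both equal to $1$ when $k=m$, via the convention $\Delta^m_0=1$, and both given by elementary expressions for small $t$), and concludes by uniqueness. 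The main obstacle I anticipate is bookkeeping in step (ii)/the ODE matching: one must carefully track how the two kinds of down-right steps (horizontal moves along a level of $\eta$, versus the vertical drops encoded in the differences $\eta_{j'}(s_{j'-1}) - \eta_{j'}(s_{j'})$ in $E_\eta$) interact with the off-diagonal $1$'s of $\epsilon_\l$ that effect the upward shift in the second index — i.e. showing that the combinatorial recursion implicit in \eqref{bdot-eta} for the minors is exactly the recursion one gets by splitting off the last horizontal segment of the topmost path $\phi_k$ in a non-crossing family. Everything else is routine once the correct identification of segments with matrix operations is in place.
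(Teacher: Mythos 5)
Your first route is essentially the paper's proof: the paper also establishes \eqref{kmgf} by a continuum Lindstr\"om--Gessel--Viennot / Karlin--McGregor path-switching argument, merely implementing the switching rigorously by realising the down-right paths as time-reversed trajectories of independent unit-rate Poisson processes and invoking the strong Markov property, with the uniform distribution of the jump times on the simplex converting expectations into the Euclidean integrals over $\Omega^m_k(t)$. The key points you isolate --- multiplicativity of the weight over segments, invariance under tail-switching at the first intersection, and the ordering forcing only the identity permutation to survive --- are exactly the ingredients of the paper's argument, so your proposal is correct and takes essentially the same approach.
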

\begin{proof}  
This is a straightforward variation of the Karlin-McGregor / Lindstr\"om-Gessel-Viennot formula,
and is proved by a standard path-switching argument, see for example~\cite[Section 1.2]{joc}.
Let $$Y(t)=(Y_1(t),\ldots,Y_k(t)),\ t\ge 0$$ be a collection of independent, unit-rate Poisson
processes started at positions $(1,\ldots,k)$.  Then $Y$ is a continuous time Markov chain with state
space $\N^k$.  Denote the transition probabilities of $Y$ by
$$p_t(y,y')=P(Y(s+t)=y'|\ Y(s)=y),\qquad s,t\ge 0.$$
For $\sigma\in S_k$ and $y\in\N^k$, write $\sigma y=(y_{\sigma(1)},\ldots,y_{\sigma(k)})$.
We note that $Y$ enjoys the strong Markov property and its law is $S_k$-invariant, that is, 
$p_t(\sigma y,\sigma y')=p_t(y,y')$ for all $\sigma\in S_k$.  Let
$$T_i=\inf\{t>0:\ Y_i(t)=Y_{i+1}(t)\},\qquad 1\le i\le n-1,$$
and $T=\min_i T_i$.  Fix $t>0$ and let $Z$ be an integrable, measurable function of $(Y(r),\ 0\le r\le t)$
which is invariant under the substitutions $Y \to \tilde Y^{(i)}$, where for each $i$,
$$\tilde Y^{(i)} (r)=\begin{cases} Y(r) & r\le T_i\\ s_i Y(r) & r >T_i\end{cases} ,$$
with $s_i$ denoting the adjacent transposition $(i,i+1)$.

We will first show that, for $y\in\N^k$ with $y_1<y_2<\cdots<y_k$,
\be\label{kmgz}
E [Z;T>t;Y(t)=y]=\sum_{\sigma\in S_k} \sgn(\sigma) E[ Z; Y(t)=\sigma y].
\ee
Note that, since $E [Z;T>t;Y(t)=\sigma y]=0$ unless $\sigma$ is the identity, this is equivalent to
$$\sum_{\sigma\in S_k} \sgn(\sigma) E[ Z; T\le t; Y(t)=\sigma y]=0,$$
or indeed
$$\sum_i \sum_{\sigma\in S_k} \sgn(\sigma) E[ Z; T=T_i\le t; Y(t)=\sigma y]=0.$$
It therefore suffices to show that, for each $i$,
$$\sum_{\sigma\in S_k} \sgn(\sigma) E[ Z; T=T_i\le t; Y(t)=\sigma y]=0.$$
Now fix $i$ and write $\tilde Y=\tilde Y^{(i)}$.  Then, by the strong Markov property
and $S_k$-invariance of $Y$, $\tilde Y$ has the same law as $Y$.  Moreover,
$$T_j=\inf\{t>0:\ \tilde Y_j(t)=\tilde Y_{j+1}(t)\},\qquad 1\le j\le n-1.$$
Thus,
\begin{align*}
E[ Z; T=T_i\le t; Y(t)=\sigma y]&=E[ Z; T=T_i\le t; \tilde Y(t)=s_i \sigma y]\\
&=E[ Z; T=T_i\le t; Y(t)=s_i \sigma y],
\end{align*}
and we conclude that
\begin{align*}
\sum_{\sigma\in S_k} \sgn(\sigma) & E[ Z; T=T_i\le t; Y(t)=\sigma y]\\
& = \sum_{\sigma\in S_k} \sgn(\sigma)  E[ Z; T=T_i\le t; Y(t)=s_i \sigma y]\\
&= - \sum_{\sigma\in S_k} \sgn(s_i \sigma)  E[ Z; T=T_i\le t; Y(t)=s_i \sigma y]\\
&= - \sum_{\sigma\in S_k} \sgn(\sigma) E[ Z; T=T_i\le t; Y(t)=\sigma y],
\end{align*}
as required.  

To see that \eqref{kmgz} implies the formula \eqref{kmgf}, we take
$$Z=e^{E_{\eta}(\hat Y_1)+\cdots+E_{\eta}(\hat Y_k)},$$
where $\hat Y(s)=Y(t-s)$, and $y=(m-k+1,\ldots,m)$.  Then, using the fact that a collection
of independent, identically distributed exponential random variables, conditioned on the value
of their sum is uniformly distributed in the corresponding simplex, we can write:
\begin{align*}
E [Z;&T>t;Y(t)=y]\\
&= P(Y(t)=y) \left|\Omega_{k,m}(t)\right|^{-k} 
\int_{\Omega^m_k(t)} e^{E_\eta(\phi_1)+\cdots+E_\eta(\phi_k)}d\phi_1\ldots d\phi_k,
\end{align*}
and
\begin{align*}
\sum_{\sigma\in S_k} \sgn(\sigma) & E[ Z; Y(t)=\sigma y]
= \det \left[ E[ e^{E_\eta(\hat Y_i)}; Y_i(t)=y_j ]\right]_{1\le i,j\le k} \\
&= \det \left[ P(Y_i(t)=y_j) \left|\Omega_{i,m-k+j}(t)\right|^{-1} b_{i,m-k+j}(t)\right]_{1\le i,j\le k} .
\end{align*}
Now, for each $i\le j$,
$$P(Y_i(t)=j)=e^{-t}\frac{t^{j-i}}{(j-i)!}=e^{-t} \left|\Omega_{ij}(t)\right|,$$
hence
$$P(Y_i(t)=y_j) \left|\Omega_{i,m-k+j}(t)\right|^{-1} = e^{-t}$$
and
$$P(Y(t)=y) \left|\Omega_{k,m}(t)\right|^{-k} = e^{-kt},$$
which concludes the proof.
\end{proof}

In particular, $X(t)=(x^m_i(t))=f(b(t)),\ t>0$ defines a path in the set of triangles $\T$.
The mapping $\Pi:\eta\mapsto (X(t),\ t>0)$ was introduced and studied in the papers~\cite{bbo1,bbo2}
and can be thought of as a continuous time version of the geometric RSK correspondence introduced by 
A. N. Kirillov~\cite{ki}.  It also appeared, in a different form (see below) in the paper~\cite{oc03}.
For readers familiar with the usual RSK correspondence, for each fixed $t>0$, the path $(\eta(s),\ 0\le s\le t)$ 
should be interpreted as the input `word', the triangle $X(t)=(x^m_i(t),\ 1\le i\le m\le n)$ as the `$P$-tableau',
the path $(x^n(s),\ 0<s\le t)$ as the `$Q$-tableau' and the vector $x^n(t)$ as their common `shape'.  

The mapping $\Pi:\eta\mapsto (X(t),\ t>0)$ defined above admits the following alternative formulation which
is, in fact, equivalent to the original definition given in~\cite{oc03}.
For $i=1,\ldots,n-1$, and continuous $\eta :(0,\infty)\to\R^n$, define 
$$(P_i \eta) (t) = \eta(t) + \left( \log \int_0^t e^{\eta_{i+1}(s)-\eta_i(s)} ds\right) (e_i-e_{i+1}) , $$
where $e_1,\ldots,e_n$ denote the standard basis vectors in $\R^n$.
Let $\Pi_1$ denote the identity mapping ($\Pi_1\eta=\eta$) and, for $2\le m\le n$, define
$$\Pi_m = P_1\circ\cdots\circ P_{m-1} \circ \Pi_{m-1}.$$
Now, for continuous $\eta:[0,\infty)\to\R^n$ with $\eta(0)=0$, define $X(t)=(x^m_i(t))$, $t>0$, by
\be\label{grsk}
x^m_i(t)=(\Pi_m\eta)_i(t),\qquad \quad 1\le i\le m\le n.
\ee
Then it holds that $\Pi\eta=(X(t),\ t>0)$.
This follows from a more general result~\cite[Theorem 3.5]{bbo1}, 
which states that $(e^{x^n_1},\ldots,e^{x^n_n})$ is the diagonal part in the Gauss decomposition of 
$b\bar{w_0}$, as well as the recursive nature of the construction.  For completeness we will include a
self-contained proof of this fact in the following, see Proposition~\ref{sum} below.

In~\cite{bbo1} it was shown that the $P_i$ satisfy the braid relations, that is
$$P_i P_{i+1} P_i = P_{i+1} P_i P_{i+1},\qquad i=1,\ldots,n-1.$$
It follows that, for each $w\in S_n$,
$$ P_w:=  P_{i_r}\cdots   P_{i_1}$$
is well defined, where $w=s_{i_1}\cdots s_{i_r}$ is any reduced decomposition of $w$
as a product of adjacent transpositions, where $s_i$ denotes the transposition $(i,i+1)$.
The above-defined $\Pi_n$ is in fact $P_{w_0}$, as can be seen using the
reduced decomposition $1\ 21\ 321\ \ldots\ n-1\ldots 21$.

It is a straightforward consequence of \eqref{grsk} that, for smooth $\eta$, the triangle 
$X=(x^m_i)$ evolves according to the dynamics 
\begin{eqnarray}\label{dyn-rsk-pi}
& \dot x^1_1=\dot\eta_1; \\ 
& \dot x^m_1=\dot x^{m-1}_1 + e^{x^m_2-x^{m-1}_1},\qquad \dot x^m_m=\dot\eta_m-e^{x^m_m-x^{m-1}_{m-1}},
\qquad 2\le m\le n;\nn \\ \nn
& \dot x^m_i = \dot x^{m-1}_i + e^{x^m_{i+1}-x^{m-1}_i} - e^{x^m_i-x^{m-1}_{i-1}},\qquad 1< i< m\le n.
\end{eqnarray}
(For details, see Proposition~\ref{S-extend}.)  Note that the initial value is singular.  

We now will explain how, using the evolution equations~\eqref{dyn-rsk-pi}, one can `insert' a path $\eta$
into an arbitrary initial triangle $\xi\in\T$. In the language of RSK, this corresponds to inserting a word 
into an arbitrary initial $P$-tableau.  For a smooth path $\eta$, the dynamic \eqref{dyn-rsk-pi} defines
a flow on triangles which we denote by $S^\eta_t$.  In other words, if we set $X(0)=\xi$ and allow $X(t)$
to evolve according to \eqref{dyn-rsk-pi}, then $X(t)=S^\eta_t\xi$.  Similarly, we denote by $R^\eta_t$
the flow on $\P$ defined by $R^\eta_tb_0=b(t)$, where $b(t)$ is the solution to \eqref{bdot-eta} with initial
condition $b(0)=b_0$.  We will explain shortly how to extend the definitions of the flows $S^\eta_t$ and 
$R^\eta_t$ to continuous paths $\eta$, but first we make a note of the important relation between them.
\begin{prop}\label{sr}  For smooth $\eta:[0,\infty)\to\R^n$ with $\eta(0)=0$,
\be
R^\eta_t = f^{-1} \circ S^\eta_t \circ f.
\ee
\end{prop}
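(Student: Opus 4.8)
The plan is to show that the coordinate change $f$ conjugates the \emph{linear} flow \eqref{bdot-eta} on $\P$ to the flow \eqref{dyn-rsk-pi} on $\T$. I would let $b(\cdot)$ be the solution of \eqref{bdot-eta} with $b(0)=b_0\in\P$; since \eqref{bdot-eta} is linear this solution exists for all $t\ge0$, and since $\epsilon(\dot\eta)$ is upper triangular the space of upper triangular matrices is invariant, so $b(t)\in B$. Set $X(t)=f(b(t))$, so that $x^m_1(t)+\cdots+x^m_k(t)=\log\Delta^m_k(b(t))$ by \eqref{det1}. The heart of the proof is then to verify that $X(\cdot)$ solves \eqref{dyn-rsk-pi}; granting this, $X(0)=f(b_0)$ and uniqueness of solutions of \eqref{dyn-rsk-pi} give $f(R^\eta_t b_0)=S^\eta_t f(b_0)$ for every $b_0$, which is the assertion. (The one point of hygiene is that $b(t)$ remains in $\P$ exactly as long as the solution of \eqref{dyn-rsk-pi} through $f(b_0)$ is defined: on a finite interval the $\Delta^m_k(b(t))$ are polynomials in the entries of the bounded continuous function $b(\cdot)$, hence bounded above, so the only way to leave $\P$ is for some $\log\Delta^m_k(b(t))=x^m_1(t)+\cdots+x^m_k(t)$ to run to $-\infty$, i.e.\ for $f(b(t))$ to leave $\T$. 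I would dispose of this in a line.)

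For the verification I differentiate. From \eqref{bdot-eta}, $\dot b_{ij}=\dot\eta_i b_{ij}+b_{i+1,j}$ (with $b_{n+1,\cdot}=0$). Applying the Leibniz rule to the minor \eqref{delta} and discarding the determinants with a repeated row, only the diagonal contribution and the single term in which the last row of the minor is replaced by the $(k+1)$-st row of $b$ survive; so, writing $M^m_k(b):=\det[b_{ij}]_{i\in\{1,\dots,k-1,k+1\},\,m-k+1\le j\le m}$ and $g^m_k:=M^m_k(b)/\Delta^m_k(b)$,
\[
\frac{d}{dt}\Delta^m_k(b)=\Big(\sum_{i=1}^k\dot\eta_i\Big)\Delta^m_k(b)+M^m_k(b),\qquad\text{hence}\qquad \frac{d}{dt}\log\Delta^m_k(b)=\sum_{i=1}^k\dot\eta_i+g^m_k.
\]
By \eqref{det1} this is the same as $\dot x^m_i=\dot\eta_i+g^m_i-g^m_{i-1}$ (with $g^m_0:=0$). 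Plugging this into \eqref{dyn-rsk-pi} reduces everything to two properties of the $g^m_k$: (i) $g^m_m=0$, because the bottom row of the matrix defining $M^m_m(b)$ is the $(m+1)$-st row of $b$ restricted to its first $m$ columns, which vanishes since $b$ is upper triangular; and (ii) for $1\le k<m$,
\[
g^m_k-g^{m-1}_k=e^{\,x^m_{k+1}-x^{m-1}_k},\qquad\text{i.e.}\qquad M^m_k(b)\,\Delta^{m-1}_k(b)-M^{m-1}_k(b)\,\Delta^m_k(b)=\Delta^m_{k+1}(b)\,\Delta^{m-1}_{k-1}(b),
\]
the two forms being equivalent via \eqref{det1}. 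Given (i) and (ii), each line of \eqref{dyn-rsk-pi} comes out by a one-line substitution: $m=1$ gives $\dot x^1_1=\dot\eta_1$ at once; for $1<i<m$ one writes $g^m_i=g^{m-1}_i+e^{x^m_{i+1}-x^{m-1}_i}$ and $g^m_{i-1}=g^{m-1}_{i-1}+e^{x^m_i-x^{m-1}_{i-1}}$ and uses $\dot\eta_i+g^{m-1}_i-g^{m-1}_{i-1}=\dot x^{m-1}_i$; the boundary cases $i=1$ and $i=m$ are the same computation, using $g^m_0=0$ and $g^m_m=g^{m-1}_{m-1}=0$ respectively.

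The only genuinely non-routine step is the determinant identity in (ii), and I expect that to be the main obstacle---or rather, the main thing to recognise. The plan for it is to see all six minors as minors of the single $(k+1)\times(k+1)$ submatrix $W=[b_{ij}]_{1\le i\le k+1,\,m-k\le j\le m}$ of $b$: indeed $\Delta^m_{k+1}(b)=\det W$, $\Delta^{m-1}_{k-1}(b)$ is the minor of $W$ with rows $k,k+1$ and columns $m-k,m$ deleted, and $\Delta^m_k(b),\Delta^{m-1}_k(b),M^m_k(b),M^{m-1}_k(b)$ are the four minors of $W$ obtained by deleting one of the rows $\{k,k+1\}$ and one of the columns $\{m-k,m\}$; then (ii) is precisely the Desnanot--Jacobi (Sylvester) relation among these minors of $W$. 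Everything else is bookkeeping with \eqref{det1}.
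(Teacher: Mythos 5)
Your proof is correct, but it takes a genuinely different route from the one in the paper. The paper argues by induction on $n$: using the factorisation $b=E_1(w^1)\cdots E_n(w^n)$ of Proposition~\ref{ny}, it peels off the outer factor $E_1(w^1)$, identifies the remaining $(n-1)\times(n-1)$ block as $(f^{n-1})^{-1}$ of the sub-triangle obtained by deleting the diagonal $x^1_1,\ldots,x^n_n$, observes from \eqref{dyn-rsk-pi} that this sub-triangle evolves under the same dynamics with the modified drift $\nu_i=\l_i-a_{i-1}+a_i$, $a_i=e^{x^{i+1}_{i+1}-x^i_i}$, and then reduces everything to the single matrix identity $\mathrm{diag}(\dot w^1)+E_1(w^1)\,\mathrm{diag}(0,\epsilon^{n-1}(\nu))=\epsilon^n(\l)E_1(w^1)$. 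You instead work directly with the minors $\Delta^m_k(b)$ that define $f$ in \eqref{det1}: differentiating them along \eqref{bdot-eta} via the Leibniz rule and reducing the resulting relations among the ratios $g^m_k=M^m_k/\Delta^m_k$ to the Desnanot--Jacobi identity applied to the $(k+1)\times(k+1)$ window $W$ (your identification of the six minors of $W$ and the resulting identity $M^m_k\Delta^{m-1}_k-M^{m-1}_k\Delta^m_k=\Delta^m_{k+1}\Delta^{m-1}_{k-1}$ are correct, as are the boundary cases $g^m_m=0$ and $g^m_0=0$). What each buys: the paper's recursion mirrors the recursive definition of $\Pi_m$ and reuses Proposition~\ref{ny}, costing only elementary matrix multiplication; your computation is ``all at once,'' bypasses the $E_m(w^m)$ factorisation entirely, and makes transparent where the telescoping structure $\dot x^m_i-\dot x^{m-1}_i=\cdots$ in \eqref{dyn-rsk-pi} comes from, at the price of invoking Desnanot--Jacobi. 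Your remark on the domain issue (that $b(t)$ can only leave $\P$ when the solution of \eqref{dyn-rsk-pi} ceases to exist) is a point the paper glosses over, and is handled adequately.
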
 
\begin{proof}
Let $X(0)\in\T$ and, for $t\ge 0$, $X(t)=S^\eta_t X(0)$ and $b(t)=f^{-1}(X(t))$.
Let us write $\l=\dot\eta$.  We are required to show that $\dot b=\epsilon_\l b$.
We will prove this by induction.  Write $\T=\T^n$, $f=f^n$, $\P=\P^n$, $\epsilon=\epsilon^n$ 
to emphasize their dependence on $n$ and for $m<n$ denote by $S^\pi_t$ the flow defined on $\T^m$ 
defined in the same way as above by a smooth path $\pi:[0,\infty)\to\R^m$.  

For $n=1$, $b=e^{x^1_1}$ so $\dot x^1_1=\l_1$ implies $\dot b = \l_1 b$, as required.  

For general $n$, by Proposition~\ref{ny} we can write
$$b=E_1(w^1)\dots E_n(w^n)$$
where 
$$w^m_1=e^{x^m_1};\qquad w^m_i=e^{x^{m+i-1}_i-x^{m+i-2}_{i-1}},\qquad 1<i\le n-m+1.$$
Define $Y=(y^m_i)\in\T^{n-1}$ by $y^m_i=x^{m+1}_i$, $1\le i\le m\le n-1$.  In other words,
$Y$ is the triangle of size $n-1$ obtained from $X$ by removing $x^1_1,\ldots,x^n_n$.
Define a smooth path $\pi:[0,\infty)\to\R^{n-1}$ by setting $\pi(0)=0$ and $\dot\pi=\nu$, where
\be\label{nu}
\nu_1=\l_1+a_1,\quad \nu_2=\l_2-a_1+a_2,\quad \ldots\quad \nu_{n-1}=\l_{n-1}-a_{n-2}+a_{n-1},
\ee
$$a_i=e^{x^{i+1}_{i+1}-x^i_i},\qquad 1\le i\le n-1.$$
Then, by \eqref{dyn-rsk-pi}, $Y(t)=S^\pi_t Y(0)$.  Moreover, from the definition
of the $w^m$ above, we can write
$$b=E_1(w^1)\begin{pmatrix} 1 & 0 & \dots & 0\\
0 & &&\\
\vdots&&b^{n-1}&\\
0&&&\end{pmatrix}$$
where $b^{n-1}=(f^{n-1})^{-1}(Y)$.  
We note that $w^1=(e^{x^1_1},a_1,\ldots,a_{n-1})$ and so, by \eqref{dyn-rsk-pi},
\be\label{vdot}
\dot w^1= (\l_1 w_1^1,(\l_2-\l_1-a_1) w^1_2, \ldots, (\l_n-\l_{n-1}-a_{n-1})w^1_n) .
\ee
By the induction hypothesis, $\dot b^{n-1}=\epsilon^{n-1}(\nu) b^{n-1}$.  Thus
\begin{align*}
\dot b &= \dot E_1(w^1)\begin{pmatrix} 1 & 0 & \dots & 0\\0 & &&\\\vdots&&b^{n-1}&\\0&&&\end{pmatrix} 
+ E_1(w^1) \begin{pmatrix} 0 & 0 & \dots & 0\\0 & &&\\\vdots&&\epsilon^{n-1}(\nu) b^{n-1}&\\0&&&\end{pmatrix} \\
&=\left[ \dot E_1(w^1) + E_1(w^1) \begin{pmatrix} 0 & 0 & \dots & 0\\0 & &&\\\vdots&&\epsilon^{n-1}(\nu)&\\0&&&\end{pmatrix}
\right] \begin{pmatrix} 1 & 0 & \dots & 0\\0 & &&\\\vdots&&b^{n-1}&\\0&&&\end{pmatrix} .
\end{align*}
Note that $\dot E_1(w^1) = \mbox{diag}(\dot w^1)$.  It therefore suffices to show that
$$ \mbox{diag}(\dot w^1) + E_1(w^1) \begin{pmatrix} 0 & 0 & \dots & 0\\0 & &&\\\vdots&&\epsilon^{n-1}(\nu)&\\0&&&\end{pmatrix}
= \epsilon^n(\l) E_1(w^1),$$
which is readily verified using \eqref{nu} and \eqref{vdot}.
\end{proof}

We will now explain how to extend the definition of the flow $S^\eta_t$ to continuous
paths $\eta:[0,\infty)\to\R^n$ with $\eta(0)=0$, by simply solving the equations \eqref{dyn-rsk-pi} in terms
of $\eta$ and then observing that $\eta$ need not be smooth in order for the solution to make sense.

Let $\xi=(\xi^m_i)\in\T$ and $\eta :[0,\infty)\to\R^n$ continuous with $\eta(0)=0$.  
Denote by $\Pi^\xi\eta$ the path $X(t)=(x^m_i(t)),\ t\ge 0$ in $\T$, defined as follows.
Set $\mu_1=\xi^1_1$,
$$\mu_m=\sum_{i=1}^m\xi^m_i-\sum_{i=1}^{m-1}\xi^{m-1}_i,$$
and define
\be\label{pi}
\pi(t)=\eta(t)+\sum_{i=1}^n \mu_i e_i.
\ee
For $2\le k\le m\le n$, define 
\be\label{def-r}
r^m_k  = \sum_{i=1}^{k-1} \xi^{m-1}_i - \sum_{i=1}^{k-1} \xi^{m}_i.
\ee
Set
\be\label{f1}
x^1_1(t)=\pi_1(t),
\ee
and, for $2\le m\le n$,
\be\label{f2}
x^m_m(t)=\pi_m(t)-\log\left[ e^{-r^m_m} + \int_0^t e^{\pi_m(s)-x^{m-1}_{m-1}(s)} ds\right].
\ee
For $2\le m\le n$ and $1<k<m$,
\be\label{f3}
x^m_k(t) = y^m_k(t)
- \log \left[ e^{-r^m_k}+\int_0^t e^{y^m_k(s)-x^{m-1}_{k-1}(s)} ds\right] ,
\ee
where
\be\label{f3a}
y^m_k(t)=x^{m-1}_k(t)+ \log \left[ e^{-r^m_{k+1}}+\int_0^t e^{y^m_{k+1}(s)-x^{m-1}_k(s)} ds\right] ;
\ee
for $2\le m\le n$, writing $y^2_2=\pi_2$,
\be\label{f4}
x^m_1(t)=x^{m-1}_1(t)+ \log \left[ e^{-r^m_{2}}+\int_0^t e^{y^m_{2}(s)-x^{m-1}_1(s)} ds\right].
\ee

The mapping $\Pi^\xi$ can be written more compactly as follows.
For $r\in\R$, $i=1,\ldots,n-1$, and continuous $\eta :[0,\infty)\to\R^n$, define 
$$(P^r_i \eta) (t) = \eta(t) + \left( \log \left[ e^{-r}+\int_0^t e^{\eta_{i+1}(s)-\eta_i(s)} ds\right]\right) (e_i-e_{i+1}).$$
Define $\Pi^\xi_1$ by $\Pi^\xi_1\eta(t)=\pi(t)$, where $\pi$ is defined as above by \eqref{pi}.
For $1< m\le n$, set
$$\Pi^{\xi}_{m} = P^{r^{m}_2}_1\circ\cdots\circ P^{r^{m}_{m}}_{m-1} \circ \Pi^\xi_{m-1}.$$
Then $\Pi^\xi\eta=((\Pi^{\xi}_m\eta)_i,\ 1\le i\le m\le n)$.

\begin{prop}\label{S-extend}
Let $\eta:[0,\infty)\to\R^n$ be a smooth path with $\eta(0)=0$.
Then, for $t\ge 0$,
$$S^\eta_t\xi=\Pi^\xi\eta(t).$$
Also, for $t>0$, $\Pi\eta(t)$ evolves according to \eqref{dyn-rsk-pi}.
\end{prop}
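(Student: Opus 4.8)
The plan is to verify directly that the path $X(t)=\Pi^\xi\eta(t)$ given by \eqref{f1}--\eqref{f4} solves the system \eqref{dyn-rsk-pi} with initial value $X(0)=\xi$. Since $\eta$ is smooth, the right-hand side of \eqref{dyn-rsk-pi} is a $C^\infty$ function of $X$, so the initial value problem has a unique solution; as $S^\eta_t\xi$ is by definition that solution, this verification gives $S^\eta_t\xi=\Pi^\xi\eta(t)$ for all $t\ge0$. For the second assertion essentially no new work is needed: by the compact form of $\Pi^\xi$ given just above, setting every $e^{-r^m_k}=0$ and $\pi=\eta$ (i.e.\ $r^m_k=+\infty$, $\mu_i=0$) turns $\Pi^\xi_m=P^{r^m_2}_1\circ\cdots\circ P^{r^m_m}_{m-1}\circ\Pi^\xi_{m-1}$ into $\Pi_m=P_1\circ\cdots\circ P_{m-1}\circ\Pi_{m-1}$, so that $\Pi\eta$ of \eqref{grsk} is given by \eqref{f1}--\eqref{f4} with these substitutions. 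For $t>0$ the integrals occurring there are finite and strictly positive (cf.\ Proposition~\ref{kmg}, which gives $b(t)\in\P$ for $t>0$), so $X(t)=\Pi\eta(t)$ is smooth on $(0,\infty)$; and since the differentiation below uses only the shape of the formulas and not the values of the constants, it also shows that $\Pi\eta(t)$ obeys \eqref{dyn-rsk-pi} for $t>0$, the initial value being singular in accordance with the remark after \eqref{dyn-rsk-pi}.

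First I would check the initial value by induction on $m$. At $t=0$ every integral in \eqref{f2}--\eqref{f4} vanishes, so, granting that $x^{m-1}_i(0)=\xi^{m-1}_i$ for all $i$, formulas \eqref{f2}--\eqref{f4} give $x^m_m(0)=\mu_m+r^m_m$, $y^m_k(0)=\xi^{m-1}_k-r^m_{k+1}$ and hence $x^m_k(0)=\xi^{m-1}_k-r^m_{k+1}+r^m_k$ for $1<k<m$, and $x^m_1(0)=\xi^{m-1}_1-r^m_2$. Substituting the definitions of the constants $\mu_m$ and of $r^m_k$ (see \eqref{def-r}), the telescoping sums collapse: one uses $r^m_2=\xi^{m-1}_1-\xi^m_1$, $r^m_k-r^m_{k+1}=\xi^m_k-\xi^{m-1}_k$ and $\mu_m+r^m_m=\xi^m_m$ to read off $x^m_k(0)=\xi^m_k$ in each case.

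Next, the evolution equations. The single computation needed is that if $u,z$ are smooth and $h(t)=c+\int_0^t e^{u(s)-z(s)}\,ds$ with $c\ge0$ and $h>0$, then $\dot h/h=e^{u-z}/h=e^{(u-\log h)-z}$, whence
$$\frac{d}{dt}\bigl(u-\log h\bigr)=\dot u-e^{(u-\log h)-z},\qquad \frac{d}{dt}\bigl(\tilde z+\log h\bigr)=\dot{\tilde z}+e^{(u-\log h)-z}$$
for any smooth $\tilde z$. Apply the first identity to \eqref{f2} (with $u=\pi_m$, $z=x^{m-1}_{m-1}$) to get $\dot x^m_m=\dot\pi_m-e^{x^m_m-x^{m-1}_{m-1}}=\dot\eta_m-e^{x^m_m-x^{m-1}_{m-1}}$; apply the second identity to \eqref{f3a} and to \eqref{f4} (with $u=y^m_{k+1}$, $z=\tilde z=x^{m-1}_k$, using that $y^m_{k+1}-\log[\,\cdot\,]=x^m_{k+1}$ by \eqref{f3} or \eqref{f2}, and the conventions $y^m_m=\pi_m$, $y^2_2=\pi_2$) to get $\dot y^m_k=\dot x^{m-1}_k+e^{x^m_{k+1}-x^{m-1}_k}$ and $\dot x^m_1=\dot x^{m-1}_1+e^{x^m_2-x^{m-1}_1}$; and apply the first identity to \eqref{f3} (with $u=y^m_k$, $z=x^{m-1}_{k-1}$) to get $\dot x^m_k=\dot y^m_k-e^{x^m_k-x^{m-1}_{k-1}}$, which combined with the previous display is the third line of \eqref{dyn-rsk-pi}. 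Together with $\dot x^1_1=\dot\pi_1=\dot\eta_1$ from \eqref{f1}, this exhausts \eqref{dyn-rsk-pi}.

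The one genuinely delicate point is bookkeeping. One has to thread the nested recursion \eqref{f3a}--\eqref{f4} correctly and, above all, notice that the integrand inside the logarithm defining $y^m_k$ in \eqref{f3a} is identical to the one inside the logarithm defining $x^m_{k+1}$ in \eqref{f3} (or \eqref{f2} when $k+1=m$); that coincidence is exactly what makes $\dot h/h$ collapse to the clean term $e^{x^m_{k+1}-x^{m-1}_k}$, and it also dictates the careful handling of the boundary indices $k=1$, $k=m$, and $m=2$. Once the indices are organized, each step is a single application of the product rule, the chain rule, and the fundamental theorem of calculus, with no estimates required.
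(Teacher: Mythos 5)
Your proposal is correct and follows essentially the same route as the paper: the key derivative identity you isolate ($\dot h/h=e^{(u-\log h)-z}$, exploiting that the integrand defining $y^m_k$ matches the one defining $x^m_{k+1}$) is exactly the paper's equation \eqref{key}, the initial-value check is the same telescoping of the constants $\mu_m$ and $r^m_k$, and the second assertion is obtained in both cases by setting $e^{-r^m_k}=0$.
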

\begin{proof}
Let $X(t)=(x^m_i(t))$ be defined by \eqref{f1}---\eqref{f4}.
For convenience, write $y^m_m(t)=\pi_m(t)$ and observe that, for $2\le k\le m\le n$,
\be\label{key}
\frac{d}{dt}  \log \left[ e^{-r^m_k}+\int_0^t e^{y^m_k(s)-x^{m-1}_{k-1}(s)} ds\right] = e^{x^m_{k}-x^{m-1}_{k-1}}.
\ee
First note that $\dot x^1_1=\dot \eta_1$ and, using \eqref{key},
$$\dot x^m_m = \dot \eta_m - e^{x^m_m-x^{m-1}_{m-1}}$$
for $2\le m\le n$.  For $2\le m\le n$ and $1< k< m$, by \eqref{f3}, \eqref{f3a} and \eqref{key},
$$\dot x^m_k = \dot y^m_k - e^{x^m_k-x^{m-1}_{k-1}}
= \dot x^{m-1}_k + e^{x^m_{k+1}-x^{m-1}_k} - e^{x^m_k-x^{m-1}_{k-1}}.$$
For $2\le m\le n$, by \eqref{f4} and \eqref{key}, 
$$\dot x^m_1 = \dot x^{m-1}_1 + e^{x^m_2-x^{m-1}_1}.$$
We have thus shown that $X(t)$ satisfies \eqref{dyn-rsk-pi}, and
it remains to check that $X(0)=\xi$. 

It follows immediately from the definitions \eqref{pi}, \eqref{f1} and \eqref{f2}
that 
$$x^m_m(0)=\mu_m+r^m_m=\xi^m_m$$ 
for $1\le m\le n$.  
For $2\le m\le n$ and $1< k< m$, from \eqref{f3} and \eqref{f3a},
$$x^m_k(0)=y^m_k(0)+r^m_k=x^{m-1}_k(0)-r^m_{k+1}+r^m_k=x^{m-1}_k(0)+\xi^m_k-\xi^{m-1}_k;$$
for $2\le m\le n$, from \eqref{f4},
$$x^m_1(0)=x^{m-1}_1(0)-r^m_2=x^{m-1}_1(0) +\xi^m_1-\xi^{m-1}_1.$$
It follows that $x^m_i(0)=\xi^m_i$ for $1\le i\le m\le n$, as required.

The second claim also follows from the above argument, taking $e^{-r^m_k}=0$ for $2\le k\le m\le n$.
\end{proof}

Thus, for $\xi\in\T$, $b\in\P$ and continuous $\eta:[0,\infty)\to\R^n$ with $\eta(0)=0$, 
we define, for $t\ge 0$, 
\be\label{def-sr}
S^\eta_t\xi=\Pi^\xi\eta(t),\qquad R^\eta_t b=f^{-1}(S^\eta_t f(b)).
\ee

Let $\eta:[0,\infty)\to\R^n$ be a continuous path with $\eta(0)=0$.
Set $b(t)=R^\eta_t b(0)$ with either $b(0)\in\P$ or $b(0)=I$. 
Then, for each $t>0$, $b(t)\in\P$ and we can define $X(t)=(x^m_i(t))=f(b(t))$.
If $b(0)=I$ then $X(t)=\Pi\eta(t)$.  If $b(0)\in\P$, then $X(t)=S^\eta_t\xi=\Pi^\xi\eta(t)$
where $\xi=f(b(0))$.  By Proposition~\ref{umi}, for each $t>0$, $b(t)\bar w_0$ has a 
Gauss decomposition 
$$b(t)\bar w_0=L(t)D(t)U(t)$$ where
$$D_{ii}=e^{x^n_i},\qquad 1\le i\le n$$
and $L\in (N_-)_{>0}$ is given by
$$L=L_1(u^1) L_2(u^2) \dots L_{n-1}(u^{n-1}) ,$$
$$u^m_i=e^{x^{m+1}_{i+1}-x^{m}_i},\qquad 1\le i\le m<n.$$
For a square matrix $A$ denote by $\Pi_-(A)$ the strictly lower triangular part of $A$.
The next proposition is essentially a special case of \cite[Proposition 6.4]{bbo2}.
\begin{prop}\label{sum}  If $\eta$ is smooth, then $L(t)$ and $R(t)=D(t)U(t)$ satisfy
$$\dot L = L\Pi_{-}(L^{-1}\epsilon(\dot\eta) L),\qquad \dot R=\epsilon(\dot x^n) R.$$
\end{prop}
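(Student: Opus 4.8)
The plan is to differentiate the Gauss decomposition $b(t)\bar w_0 = L(t)R(t)$, with $R=DU$, and read off both evolution equations by projecting onto the complementary subspaces of strictly lower triangular and upper triangular matrices. Since $\bar w_0$ is constant and $\dot b = \epsilon(\dot\eta)b$ by \eqref{bdot-eta}, differentiating gives $\epsilon(\dot\eta)LR = \dot L R + L\dot R$, whence
$$L^{-1}\epsilon(\dot\eta)L = L^{-1}\dot L + \dot R R^{-1}.$$
Here $L^{-1}\dot L$ is strictly lower triangular (as $L$ is lower uni-triangular) and $\dot R R^{-1}$ is upper triangular (as $R$ is upper triangular); since every matrix splits uniquely in this way, $L^{-1}\dot L = \Pi_-(L^{-1}\epsilon(\dot\eta)L)$, which is the first asserted identity, and $\dot R R^{-1}$ equals the upper triangular part of $L^{-1}\epsilon(\dot\eta)L$. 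Smoothness of $b$ ensures the Gauss factors $L,D,U$ are smooth on $t>0$, so all the derivatives make sense.

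It then remains to identify this upper triangular part with $\epsilon(\dot x^n)$. The key point is that $\epsilon(\dot\eta)$ has upper bandwidth one, and that conjugation by a lower triangular matrix preserves this: in $(L^{-1}\epsilon(\dot\eta)L)_{ij} = \sum_{k,l}(L^{-1})_{ik}\,\epsilon(\dot\eta)_{kl}\,L_{lj}$ a nonvanishing term forces $k\le i$, $l\ge j$ and $l\in\{k,k+1\}$, hence $j-1\le k\le i$; so the entry vanishes whenever $j\ge i+2$, and for $j=i+1$ the only surviving term is $(L^{-1})_{ii}\,\epsilon(\dot\eta)_{i,i+1}\,L_{i+1,i+1}=1$. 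Thus the upper triangular part of $L^{-1}\epsilon(\dot\eta)L$ is bidiagonal with ones on the superdiagonal.

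Finally, writing $R=DU$ with $U$ uni-upper triangular, $\dot U U^{-1}$ is strictly upper triangular, so the diagonal part of $\dot R R^{-1}=\dot D D^{-1}+D\dot U U^{-1}D^{-1}$ is $\dot D D^{-1}$, whose $(i,i)$ entry is $\frac{d}{dt}\log D_{ii}=\dot x^n_i$ since $D_{ii}=e^{x^n_i}$ by Proposition~\ref{umi}. Combining the last two paragraphs, the upper triangular part of $L^{-1}\epsilon(\dot\eta)L$ has diagonal $\dot x^n$, superdiagonal all ones, and nothing further above, i.e.\ it is exactly $\epsilon(\dot x^n)$, so $\dot R = \epsilon(\dot x^n)R$.

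The only real obstacle is the band-structure observation of the second paragraph; once it is in place the upper triangular part of $L^{-1}\epsilon(\dot\eta)L$ is so constrained that it is pinned down by the single already-established fact $D_{ii}=e^{x^n_i}$ together with the trivial superdiagonal computation. As a cross-check, one could instead evaluate $(L^{-1}\epsilon(\dot\eta)L)_{ii}$ directly, obtaining $\dot\eta_i + L_{i+1,i}-L_{i,i-1}$, and verify it equals $\dot x^n_i$ using the explicit subdiagonal entries $L_{i+1,i}=\sum_{m\ge i}e^{x^{m+1}_{i+1}-x^m_i}$ from Proposition~\ref{umi} and a telescoping of the triangle dynamics \eqref{dyn-rsk-pi}; but routing through the diagonal of $\dot D D^{-1}$ avoids this altogether.
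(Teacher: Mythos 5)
Your proof is correct and follows essentially the same route as the paper: differentiate $b\bar w_0=LR$, split $L^{-1}\epsilon(\dot\eta)L=L^{-1}\dot L+\dot R R^{-1}$ into its strictly lower and upper triangular parts, and identify the diagonal of $\dot R R^{-1}$ with $\dot D D^{-1}=\mathrm{diag}(\dot x^n)$. The only difference is that you spell out the band-structure computation showing the strictly upper triangular part of $L^{-1}\epsilon(\dot\eta)L$ is the shift matrix, a detail the paper simply asserts.
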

\begin{proof}  We have $\dot b=\epsilon(\dot\eta) b$, hence
$$\dot L R+L\dot R=\epsilon(\dot\eta) LR,$$
or, equivalently,
$$L^{-1}\dot L+\dot R R^{-1} = L^{-1}\epsilon(\dot\eta) L.$$
Now, since $\dot R R^{-1}$ is upper triangular and $L^{-1}\dot L$ is strictly lower triangular,
this implies 
$$L^{-1} \dot L=\Pi_-(L^{-1} \dot L +\dot R R^{-1}) = \Pi_{-}(L^{-1}\epsilon(\dot\eta) L),$$ proving the first claim.
Similarly, the strictly upper triangular part of $\dot R R^{-1}$ must equal 
the strictly upper triangular part of $L^{-1}\epsilon(\dot\eta) L$, which is just the shift matrix $\epsilon(0)$;
also, since $R=DU$, the diagonal part of $\dot R R^{-1}$ is $\dot D D^{-1}=\mbox{diag}(\dot x^n)$.
Hence, $\dot R R^{-1}=\mbox{diag}(\dot x^n)+\epsilon(0)=\epsilon(\dot x^n)$, as required.
\end{proof}

\section{Whittaker functions and the quantum Toda lattice}\label{s-wfqt}

Following~\cite{g,jk,gklo}, for $X=(x^m_i)\in \T$ and $\l\in\R^n$, we define, for $n\ge 2$,
\be\label{F}
\F(X)=\sum_{1\le i\le m<n} e^{x^{m+1}_{i+1}-x^m_i} + e^{x^m_i-x^{m+1}_i}
\ee
and
\be\label{Flambda}
\F_\l(X)=\sum_{m=1}^n \l_m\left(\sum_{i=1}^{m-1} x^{m-1}_i - \sum_{i=1}^{m} x^{m}_i \right) + \F(X).
\ee
If $n=1$, we set $\F(X)=0$ and $\F_\l(X)=-\l_1 x^1_1$.

For some readers, the following graphical representation may be helpful for understanding the above definition,
and also for following some of the proofs which will be given later.  We view a triangle $X=(x^m_i)$ as an array:
\begin{center}
\begin{tikzpicture}[scale=.7]
\node at (0,0) {$x^n_n$};
\draw [dotted, thick] (0.8,0.8) -- (1.2,1.2);
\node at (2,2) {$x^2_2$};
\node at (3,3) {$x^1_1$};
\node at (4,2) {$x^2_1$};
\draw [dotted, thick] (4.8,1.2) -- (5.2,0.8);
\node at (6,0) {$x^n_1$};
\draw [dotted, thick] (2.7,0) -- (3.3,0);
\node at (-2,1.5) {$X=$};
\end{tikzpicture}
\end{center}
The elements of this array are connected by arrows as shown, with the obvious omissions
at the boundary:

\begin{center}
\begin{tikzpicture}[scale=1]
\node at (2,2) {$x^{m-1}_i$};
\draw [->] [draw=gray, thick] (2.3,1.7)--(2.7,1.3);
\node at (3,1) {$x^m_i$};
\draw [->] [draw=gray, thick] (3.3,.7)--(3.7,.3);
\node at (4,0) {$x^{m+1}_i$};
\draw [->] [draw=gray, thick] (2.3,0.3)--(2.7,.7);
\draw [->] [draw=gray, thick] (3.3,1.3)--(3.7,1.7);
\node at (2,0) {$x^{m+1}_{i+1}$};
\node at (4,2) {$x^{m-1}_{i-1}$};
\end{tikzpicture}
\end{center}
To an arrow $a\to b$, we associated the weight $e^{a-b}$.  Then $\F(X)$ is just the sum
of the weights associated with the arrows in the diagram of $X$.

For $x\in\R^n$, denote by $\T(x)$ the set of triangles $X=(x^m_i)\in\T$ with bottom row $x^n=x$.
Let $\epsilon>0$ and define
$$\psi_\l(x)=\int_{\T(x)} e^{- \F_\lambda(X)/\epsilon} \prod_{1\le i\le m<n} dx^m_i .$$
These are eigenfunctions of the quantum Toda lattice with Hamiltonian
$$H=-\epsilon\Delta+\frac2\epsilon\sum_{i=1}^{n-1} e^{x_{i+1}-x_i},$$
also known as $GL(n,\R)$-Whittaker functions~\cite{k,g,jk,gklo,is}.

In \cite{r} it was shown that, for each $x\in\R^n$, the function $\F(X)$ is strictly convex and has a unique critical point 
on $\T(x)$, which is a minimum.  This property extends trivially to $\F_\l(X)$, as we are simply adding a linear functional.
Denote the corresponding critical point by $X_\l^*(x)$.   Note that, since $\F(X)$ is strictly convex, $X^*_\l(x)$ is a 
continuous function of $\l$.

For $X=(x^m_i)\in \T$ and, for $1\le i\le m<n$, define $l^m_i=l^m_i(X)$
and $r^m_i=r^m_i(X)$ as follows.  For $1\le i<m<n$,
$$l^m_i=e^{x^{m+1}_{i+1}-x^m_i}+e^{x^{m-1}_i-x^m_i},$$
and, for $1\le m<n$,
$$l^m_m=e^{x^{m+1}_{m+1}-x^m_m}.$$
Similarly, for $1<i\le m<n$,
$$r^m_i=e^{x^m_i-x^{m+1}_i}+e^{x^m_i-x^{m-1}_{i-1}},$$
and, for $1\le m<n$,
$$r^m_1=e^{x^m_1-x^{m+1}_1}.$$
For each $x\in\R^n$, the critical point $X=X^*_\l(x)$ of $\F_\l$ on $\T(x)$ satisfies
\be\label{cp}
\l_m+l^m_i(X)=\l_{m+1}+r^m_i(X),\qquad 1\le i\le m<n.
\ee
For $\lambda\in\R^n$, denote by $\T_\lambda$ the set of $X=(x^m_i)\in \T$ which satisfy the 
critical point equations \eqref{cp}.  Note that each element $X=(x^m_i)\in\T_\l$ is uniquely determined
by its bottom row $x^n$, via $X=X^*_\l(x^n)$.  

\begin{rem} The critical point equations \eqref{cp} are closely related to the geometric 
Bender-Knuth transformations introduced in~\cite{ki}.  
These are birational involutions $b^m_i$, $1\le i\le m<n$ 
defined on $\T$ as follows: if $X=(x^m_i)\in\T$ then $b^m_i(X)$ is obtained from $X$ by replacing 
$x^m_i$ with
$$\tilde x^m_i = x^m_i+ \log \frac{l^m_i(X)}{r^m_i(X)} ,$$
and leaving the other entries unchanged.  Thus, $X\in\T_0$ if, and only if, it is invariant under all of 
the geometric Bender-Knuth transformations $b^m_i$, $1\le i\le m<n$.  We note that this implies, 
in particular, that each $X\in\T_0$ is a fixed point of the geometric lifting of Schutzenberger's involution, 
defined in~\cite{ki} as a composition of geometric Bender-Knuth transformations.
\end{rem}

\section{Geometric RSK and Brownian motion}\label{grsk-bm}

In this section we recall some of the main results from~\cite{noc} which relate the continuous-time 
geometric RSK mapping, with Brownian motion as input, to the quantum Toda lattice.
\begin{thm}\cite{noc}\label{noc1}
If $\eta(t)$ is a Brownian motion in $\R^n$ with $\eta(0)=0$, infinitesimal variance 
$\epsilon$ and drift $\lambda\in\R^n$, then $x(t)=\Pi_n\eta(t)$ is a diffusion process in $\R^n$ with infinitesimal
generator given by
$$\L_\lambda = -\frac12 \psi_\l(x)^{-1} \left( H+\frac1\epsilon\sum_i\l_i^2\right) \psi_\l(x)
= \frac\epsilon2 \Delta + \epsilon \nabla\log \psi_\l \cdot \nabla .
$$
\end{thm}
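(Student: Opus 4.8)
The plan is to decompose the map $\Pi_n = P_{w_0}$ into its elementary building blocks $P_i$ and track how each one transforms a diffusion process, using It\^o calculus together with the intertwining/Markov-function machinery. First I would reduce to the one-step operators. Recall $\Pi_n = P_1\circ P_2 P_1 \circ \cdots \circ P_{n-1}\cdots P_1 \circ \Pi_{n-1}$, and each $P_i$ acts on a continuous path $\eta$ by adding $\bigl(\log\int_0^t e^{\eta_{i+1}(s)-\eta_i(s)}\,ds\bigr)(e_i - e_{i+1})$. When $\eta$ is Brownian with drift, the functional $\int_0^t e^{\eta_{i+1}(s)-\eta_i(s)}\,ds$ is the classical exponential functional of Brownian motion, and the pair $\bigl(\eta_i+\eta_{i+1},\ \log\int_0^t e^{\eta_{i+1}-\eta_i}\bigr)$ defines, via It\^o's formula, a new diffusion — this is exactly the geometric lifting of the Pitman $2M-X$ transform, and the key analytic input is the Matsumoto--Yor intertwining relating Brownian motion and the diffusion generated by the $GL(2)$-Whittaker (Macdonald/Bessel-type) Hamiltonian. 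I would first establish this $n=2$, one-variable statement carefully as a lemma: if $\eta$ is a $2$-dimensional Brownian motion with drift $\lambda$, then $P_1\eta$ is a diffusion whose generator involves the rank-one Whittaker function, and moreover the output process $P_1\eta$ together with the remaining coordinates carries a precise conditional law (a Markov-function / Rogers--Pitman type identity).

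Next I would run the induction on $n$. Assume the result for $n-1$: $\Pi_{n-1}\eta$ (the bottom $n-1$ rows built from the first $n-1$ coordinates, suitably interpreted) is a diffusion with generator $\L_\lambda^{(n-1)}$ built from $\psi_\lambda^{(n-1)}$. Apply the string $P_1 P_2 P_1 \cdots P_{n-1}\cdots P_1$. Using the $n=2$ lemma repeatedly, each $P_i$ transforms the current diffusion into another diffusion; the nontrivial point is that the successive conditioning relations compose consistently so that the full vector $x(t)=\Pi_n\eta(t)$ is itself Markov, with a generator that can be identified. Here I would invoke the integral recursion for Whittaker functions — $\psi_\lambda^{(n)}(x)$ is obtained from $\psi_\lambda^{(n-1)}$ by a Givental-type integral transform over the top row of the Gelfand--Tsetlin triangle, exactly mirroring the definition $\psi_\lambda(x)=\int_{\T(x)} e^{-\F_\lambda(X)/\epsilon}\prod dx^m_i$ — to see that the $h$-transform structure is preserved at each stage: each $P_i$ corresponds to an $h$-transform by a Whittaker-type function of fewer variables, and the product of these $h$-transforms telescopes into the single $h$-transform by $\psi_\lambda$. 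The equality of the two expressions for $\L_\lambda$ is then just the computation $\psi_\lambda^{-1}(H + \epsilon^{-1}\sum\lambda_i^2)\psi_\lambda = \epsilon\Delta/2 + \epsilon\nabla\log\psi_\lambda\cdot\nabla$ multiplied by $-1/2$, using that $\psi_\lambda$ is an eigenfunction of $H$ with eigenvalue $-\epsilon^{-1}\sum\lambda_i^2$ (the standard conjugation identity $f^{-1}(-\epsilon\Delta + V)f = -\epsilon\Delta - 2\epsilon\nabla\log f\cdot\nabla$ when $(-\epsilon\Delta+V)f = 0$), which is routine.

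The main obstacle is the induction step: verifying that applying the whole string of $P_i$'s to a diffusion again yields a \emph{Markov} process in the output coordinates alone, and identifying its generator. The subtlety is that $P_i$ reads off a path functional of the \emph{current} configuration, so a priori $\Pi_n\eta(t)$ need not be autonomous; one must show that the extra randomness integrated out at each step is precisely conditionally independent given the output, which is where the Matsumoto--Yor/Whittaker intertwining and the $GL(n)$ Whittaker integral recursion do the real work. An alternative, and perhaps cleaner, route to this step is to bypass the iterated conditioning and instead compute directly: use Proposition~\ref{sr} and Proposition~\ref{sum} to express $x^n(t)$ via the $LDU$ decomposition of $b(t)\bar w_0$ and show, by a Girsanov change of measure built from $\psi_\lambda$, that the law of the path $(x^n(s),\,0<s\le t)$ under Brownian input is exactly that of the claimed diffusion — the factor $e^{-\F_\lambda/\epsilon}$ appearing in $\psi_\lambda$ is engineered to make this change of measure work. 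Either way, the crux is the $h$-transform identification; once that is in place the generator formula and its rewriting follow immediately.
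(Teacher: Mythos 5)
First, a point of comparison that matters here: the paper does not prove Theorem \ref{noc1}. It is recalled verbatim from \cite{noc}, with only the remark that passing from $\epsilon=1$ to general $\epsilon$ is a minor modification, so there is no internal proof to measure your argument against. Judged against the argument of \cite{noc}, your outline does identify the right ingredients: the Matsumoto--Yor theorem as the $n=2$ base case, induction on $n$ driven by the recursive (Givental) integral representation $\psi_\l(x)=\int_{\T(x)}e^{-\F_\l(X)/\epsilon}\prod dx^m_i$, and a Rogers--Pitman/intertwining mechanism by which the bottom row of the triangle becomes an autonomous diffusion. This is indeed the architecture of the cited proof; the commutative diagram following Theorem \ref{noc2}, with the kernel $\Sigma_\l$, is precisely the intertwining one must establish.

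As a proof, however, your text has a gap exactly where you locate the crux. The assertion that ``each $P_i$ transforms the current diffusion into another diffusion'' is not correct as stated: the intermediate paths obtained partway through the string $P_1P_2P_1\cdots P_{n-1}\cdots P_1$ are not in general Markov in their own coordinates. What is Markov is the full triangle $X(t)=\Pi\eta(t)$, whose evolution is \eqref{dyn-rsk-pi-sde}, and the content of the theorem is that $x^n(t)$ is a Markov \emph{function} of that triangle process. The decisive step --- verifying the intertwining $\L_\l\Sigma_\l=\Sigma_\l\mathcal{A}$, with $\mathcal{A}$ the generator of \eqref{dyn-rsk-pi-sde}, by explicit integration by parts against $e^{-\F_\l(X)/\epsilon}$ on $\T(x)$, together with the eigenfunction equation $H\psi_\l=-\epsilon^{-1}\bigl(\sum_i\l_i^2\bigr)\psi_\l$ and an entrance-law argument for the singular initial condition of $\Pi\eta$ --- is deferred in your plan to ``the Matsumoto--Yor/Whittaker intertwining doing the real work,'' and until it is carried out the plan is not a proof. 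The alternative Girsanov route is too vague to assess: a change of measure built from $\psi_\l$ acts on the law of the input $\eta$, and one still needs the intertwining (or an equivalent identity) to transfer it to the law of the output $x^n$.
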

We note that in~\cite{noc} this statement was proved in the case $\epsilon=1$, but this is minor
modification.  In the case $n=2$, it is equivalent to a Theorem of Matsumoto and Yor~\cite{my}.
The diffusion process with generator $\L_\l$ was first introduced in~\cite{boc}, in the context of
more general root systems.

In the paper~\cite{noc} more general initial conditions were also considered, and two quite
different stochastic dynamics on triangles, namely:
\begin{eqnarray}\label{dyn-rsk-pi-sde}
& dx^1_1=dB_1+\l_1dt; \\ 
& dx^m_m=dB_m+(\l_m-e^{x^m_m-x^{m-1}_{m-1}})dt,\qquad 2\le m\le n;\nn \\ 
& d x^m_1=d x^{m-1}_1 + e^{x^m_2-x^{m-1}_1}dt,\qquad 2\le m\le n;\nn\\ 
\nn & d x^m_i = d x^{m-1}_i + (e^{x^m_{i+1}-x^{m-1}_i} - e^{x^m_i-x^{m-1}_{i-1}})dt,\qquad 1< i< m\le n
\end{eqnarray}
and
\begin{eqnarray}\label{dyn-sde}
& d x^1_1=dW^1_1+\l_1dt;  \\ 
& d x^m_m=dW^m_m+(\l_m-e^{x^m_m-x^{m-1}_{m-1}})dt,\qquad 2\le m\le n;\nn \\ 
& d x^m_1=dW^m_1+(\l_m + e^{x^{m-1}_1-x^m_1})dt,\qquad 2\le m\le n;\nn \\ \nn
& d x^m_i = dW^m_i+(\l_m + e^{x^{m-1}_i-x^m_i} - e^{x^m_i-x^{m-1}_{i-1}})dt,\qquad 1< i< m\le n
\end{eqnarray}
where $B_i,\ W^m_i,\ 1\le i\le m\le n$ are independent one-dimensional Brownian motions
(without drift) and, for the purposes of this discussion, each with infinitesimal variance $\epsilon$.

The first of these, \eqref{dyn-rsk-pi-sde}, describes the (stochastic) evolution of $\Pi\eta(t)$,
or $\Pi^\xi\eta(t)$ any initial condition $\xi\in\T$, where $\eta(t)=B(t)+t\l$,
as can be seen, for example, from the proof of Proposition~\ref{S-extend}
above.  The second dynamic \eqref{dyn-sde} is a geometric lifting of Warren's process~\cite{w}.

\begin{thm} \cite{noc}\label{noc2}
For each $x\in\R^n$, if the initial condition $X(0)$ is chosen at random according to the 
probability measure on $\T(x)$ with density proportional to $e^{- \F_\lambda(X)/\epsilon}$ and $X(t)$ evolves 
according to \eqref{dyn-rsk-pi-sde} or \eqref{dyn-sde}, then $x^n(t),\ t\ge 0$ is a diffusion process with infinitesimal 
generator $\L_\l$, started at $x$.  Moreover, for each $t>0$, the conditional law of $X(t)$ given $x^n(s),\ 0\le s\le t$
is supported on $\T(x^n(t))$ with density proportional to $e^{- \F_\lambda(X)/\epsilon}$.
\end{thm}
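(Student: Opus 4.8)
The plan is to recognise the statement as an instance of the ``Markov functions'' theorem of Rogers and Pitman. In that setting one has a Markov process on a space $E$ with semigroup $(P_t)$, a measurable map $\phi\colon E\to F$, and a Markov kernel $\Lambda$ from $F$ to $E$ carried by the fibres of $\phi$ (that is, $\phi\circ\Lambda=\mathrm{id}$); if there is a semigroup $(Q_t)$ on $F$ with $\Lambda P_t=Q_t\Lambda$ for all $t\ge 0$, then, started from $X(0)\sim\Lambda(y_0,\cdot)$, the image $\phi(X(t))$ is Markov with semigroup $(Q_t)$ from $y_0$, and for each $t>0$ the conditional law of $X(t)$ given $(\phi(X(s)),\ s\le t)$ is $\Lambda(\phi(X(t)),\cdot)$. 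Here I take $E=\T$, $F=\R^n$, $\phi(X)=x^n$, and $\Lambda(x,dX)=\psi_\l(x)^{-1}e^{-\F_\l(X)/\epsilon}\prod_{1\le i\le m<n}dx^m_i$, i.e. the probability measure on $\T(x)$ appearing in the statement; this $\Lambda$ is visibly carried by the fibres $\T(x)$. With $(P_t)$ the semigroup of the $\T$-valued diffusion (either \eqref{dyn-rsk-pi-sde} or \eqref{dyn-sde}) and $(Q_t)$ that of $\L_\l$, \emph{both} assertions of the theorem are exactly the conclusion of this criterion, so the entire proof reduces to the intertwining $\Lambda P_t=Q_t\Lambda$.

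I would obtain this from its infinitesimal form $\Lambda\mathcal A=\L_\l\Lambda$ on a core of test functions, $\mathcal A$ denoting the generator of the $\T$-diffusion. Promoting the generator identity to the semigroup identity is routine once one has non-explosion and well-posedness of the relevant martingale problems --- for \eqref{dyn-rsk-pi-sde} this follows from the explicit solution formulas \eqref{f1}--\eqref{f4}, for \eqref{dyn-sde} from a standard localisation/Lyapunov argument, and for $\L_\l$ from Theorem~\ref{noc1} --- together with the fact that $\Lambda$ maps a core of bounded smooth functions of $x$ into the domain of $\mathcal A$. This last point, and indeed convergence of the fibre integrals and of their $x$-derivatives, rests on the strict convexity of $\F$ on each $\T(x)$ established in~\cite{r}, which forces $e^{-\F_\l/\epsilon}$ and all its derivatives to decay at infinity along the fibre.

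The essential point is thus $\Lambda\mathcal A=\L_\l\Lambda$. Using the factorisation $\L_\l h=\psi_\l^{-1}\big[\tfrac\epsilon2\Delta_x-\tfrac\epsilon2\,\psi_\l^{-1}\Delta\psi_\l\big](\psi_\l h)$ and the Whittaker eigenfunction equation $H\psi_\l=-\tfrac1\epsilon\big(\sum_i\l_i^2\big)\psi_\l$, which gives $\tfrac\epsilon2\,\psi_\l^{-1}\Delta\psi_\l=\tfrac1\epsilon\sum_{i=1}^{n-1}e^{x_{i+1}-x_i}+\tfrac1{2\epsilon}\sum_i\l_i^2$, the identity becomes
\[
\int_{\T(x)}(\mathcal A g)\,e^{-\F_\l/\epsilon}\,dX=\Big[\tfrac\epsilon2\Delta_x-\tfrac1\epsilon\sum_{i=1}^{n-1}e^{x_{i+1}-x_i}-\tfrac1{2\epsilon}\sum_i\l_i^2\Big]\int_{\T(x)}g\,e^{-\F_\l/\epsilon}\,dX
\]
for test functions $g$ on $\T$, with the bottom row $x$ fixed on the left and the $x$-derivatives acting outside the fibre integral on the right. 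I would split $\mathcal A$ into the part acting on the bottom-row variables $x^n$ and the part acting on the interior variables $x^m_i$, $1\le i\le m<n$. For the bottom-row part one differentiates under the fibre integral, producing the gradient $\partial_{x^n}\F_\l$, a signed sum of nearest-neighbour exponentials whose negative is exactly the bottom-row drift of \eqref{dyn-sde}. For the interior part one integrates by parts over $\T(x)$ --- no boundary contribution, by the decay from~\cite{r} --- transferring the derivatives onto $e^{-\F_\l/\epsilon}$ and thereby contracting the interior drifts with $\nabla\F_\l$; here one uses that $\partial_{x^m_i}\F_\l$ equals $(\l_{m+1}+r^m_i(X))-(\l_m+l^m_i(X))$, whose vanishing is the critical-point system~\eqref{cp}. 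Collecting terms, the nearest-neighbour exponentials telescope along the arrow diagram of $\F$, leaving only the outer arrows $e^{x_{i+1}-x_i}$, which reassemble into the Toda potential; the linear $\l$-terms combine into the constant $-\tfrac1{2\epsilon}\sum_i\l_i^2$; and $\tfrac\epsilon2\Delta_x$ is left untouched.

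I expect this last bookkeeping to be the main obstacle: the collapse of the drift exponentials of the triangle dynamics onto precisely the quantum Toda operator is not an identity robust to perturbing the dynamics, and the cleanest way to organise it is probably an induction on $n$, using the recursion $\Pi^\xi_m=P^{r^m_2}_1\circ\cdots\circ P^{r^m_m}_{m-1}\circ\Pi^\xi_{m-1}$ (as in the proof of Proposition~\ref{sr}) together with the matching recursive (Givental-type) integral representation of $\psi_\l$, the base case $n=2$ being the Matsumoto--Yor identity mentioned after Theorem~\ref{noc1}. Finally, the two dynamics are treated on the same footing: in each the bottom-row martingale part is a standard $\R^n$-Brownian motion of variance $\epsilon$, and one simply runs the verification of $\Lambda\mathcal A=\L_\l\Lambda$ once for each generator --- the dynamics \eqref{dyn-rsk-pi-sde} being the more delicate, since its generator on $\T$ is only degenerate-elliptic (interior and bottom-row variables share the driving noise), so there I would first pass to the factorisation coordinates of Proposition~\ref{ny} before integrating by parts, whereas for \eqref{dyn-sde} the diffusion matrix is already diagonal.
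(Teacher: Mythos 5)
First, a point of order: the paper you were given does not actually prove Theorem~\ref{noc2} --- it is recalled from the reference \cite{noc} --- so there is no in-paper argument to compare against. Your strategy (reduce both assertions to the Rogers--Pitman Markov-functions criterion for the kernel $\Sigma_\l$ and the projection $\pi$, then reduce the semigroup intertwining $\Sigma_\l P^\l_t=Q^\l_t\Sigma_\l$ to the generator identity $\Sigma_\l\mathcal A=\L_\l\Sigma_\l$, rewritten via the Whittaker eigenvalue equation) is exactly the strategy of the proof in \cite{noc}, and your reformulation of the identity as a statement about fibre integrals against $e^{-\F_\l/\epsilon}$ is correct.

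That said, the decisive step is precisely the one you defer to ``bookkeeping'', and it is not routine; as written the proposal asserts, rather than establishes, the identity that carries all the content of the theorem. Two concrete warnings. The drift of $x^m_i$ in \eqref{dyn-sde} is $\l_m+e^{x^{m-1}_i-x^m_i}-e^{x^m_i-x^{m-1}_{i-1}}$, which involves only the row above, whereas $\partial_{x^m_i}\F_\l=(\l_{m+1}+r^m_i)-(\l_m+l^m_i)$ involves the rows above \emph{and} below; so $e^{-\F_\l/\epsilon}$ is not the Gibbs measure of a gradient flow along the fibre, the integration by parts does not simply contract drifts against $\nabla\F_\l$, and the cancellation really does require the full bidiagonal ``arrow'' structure of $\F$ --- this is where the proof either works or fails. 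Second, for \eqref{dyn-rsk-pi-sde} the $n(n+1)/2$ coordinates are driven by only $n$ Brownian motions, so the generator has a degenerate diffusion matrix with off-diagonal second-order terms that must also be matched in the intertwining; the two dynamics genuinely cannot be ``treated on the same footing'' by running one computation twice, and the fact that both intertwine with the same $\L_\l$ is one of the main points of \cite{noc}. Your proposed induction on $n$ via the recursion for $\Pi^\xi_m$ and the Givental-type recursion for $\psi_\l$, with Matsumoto--Yor as the base case, is the right way to organise the computation, but until it is carried out the proof is a correct plan rather than a proof.
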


This theorem can be represented as a commutative diagram, as follows.
Denote by $Q^\l_t$ the Markov semigroup associated with the diffusion with infinitesimal generator $\L_\l$,
that is, $Q^\l_t=e^{t\L_\l}$, by $P^\l_t$ the Markov semigroup associated with the Markov process defined
by {\em either} \eqref{dyn-rsk-pi-sde} or \eqref{dyn-sde}, and define Markov kernels $\Sigma_\l$ from $\R^n$
to $\T$ and $\pi$ from $\T$ to $\R^n$, by
$$(\Sigma_\l f)(x)=\psi_\l(x)^{-1} \int_{\T(x)} e^{- \F_\lambda(X)/\epsilon} f(X) \prod_{1\le i\le m< n} dx^m_i$$
for suitable $f:\T\to\R$ and, writing $X=(x^m_i)\in\T$,
$$(\pi g)(X) = g(x^n).$$
Then, according to Theorem~\ref{noc2}, the following diagram commutes:

\begin{center}
\begin{tikzpicture}
\node at (2.7,2) {$\T$};
\draw [->] (4.5,2)--(3.5,2);
\node at (4,2.3) {$\Sigma_\l$};
\node at (5.3,2) {$\R^n$};
\node at (2.7,0) {$\T$};
\node at (4,.3) {$\pi$};
\node at (5.3,0) {$\R^n$};
\draw [->] (3.5,0)--(4.5,0);
\draw [->] (2.7,1.5)--(2.7,0.5);
\draw [->] (5.3,1.5)--(5.3,0.5);
\node at (2.3,1) {$P^\l_t$};
\node at (5.7,1) {$Q^\l_t$};
\end{tikzpicture}
\end{center}

Theorem~\ref{noc1} is a generalisation of Pitman's `$2M-X$' theorem~\cite{pitman}, which states
that, if $X_t$ is a standard one-dimensional Brownian motion and $M_t=\max_{0\le s\le t} X_s$,
then $2M-X$ is a three-dimensional Bessel process.  Pitman's theorem was generalised to the
type $A_{n-1}$ case (from $A_1$) in~\cite{bj,oy} and to arbitrary finite Coxeter groups in~\cite{bbo1,bbo2}.  
For discrete versions, see~\cite{oc03a,oc03b,llp}.  These generalisations are closely related to the 
RSK correspondence, and also longest increasing subsequences, percolation and queues~\cite{bdj,b,gtw}.
Pitman's theorem was extended to the geometric setting by Matsumoto and Yor~\cite{my}, and Theorem~\ref{noc1} 
can be regarded as a geometric lifting of the generalisations of Pitman's theorem for type $A_{n-1}$ given 
in~\cite{bj,oy}.  It has been generalised to other types in~\cite{ch}.  A discrete-time version is given in~\cite{cosz}
(see also~\cite{osz}) in the context of Kirillov's geometric RSK mapping on matrices, and a fully
discrete $q$-version, in the context of Ruijenaars $q$-Toda difference operators and $q$-Whittaker 
functions, in~\cite{op} (see also~\cite{bp,pei}).

The second dynamics \eqref{dyn-sde} is a geometric lifting of a process on Gelfand-Tsetlin patterns 
introduced by Warren~\cite{w}.  
Nordenstam~\cite{n} showed that a discrete version of Warren's process is in fact closely related to a 
{\em shuffling algorithm} which was previously studied in the random tilings literature~\cite{eklp,p}, 
see also~\cite{bf,ww,bc}.  
For some time now it has been a natural question to 
understand the relationship between these two types of dynamics.  As we will see, there is one setting
in which the answer is simple: in the semi-classical ($\epsilon\to 0$) limit of Theorem~\ref{noc2}, they are, in fact, equivalent!

\section{A semi-classical limit}\label{scl}

Theorem~\ref{noc1} can be restated as follows:  if $\eta(t)$ is a Brownian motion in $\R^n$ with 
$\eta(0)=0$, infinitesimal variance $\epsilon$ and drift $\lambda\in\R^n$, then $x(t)=\Pi_n\eta(t)$ is 
a weak solution to the stochastic differential equation
\be\label{sde}
dx = \sqrt{\epsilon} dW + \epsilon  \nabla\log \psi_\l (x) dt
\ee
where $W$ is a standard Brownian motion in $\R^n$.
From the definition of $\psi_\l$, formally taking the limit in \eqref{sde} as $\epsilon\to0$  yields
\be\label{gradu}
\dot x = - \nabla u_\l(x),
\ee
where, in the notation of Section \ref{s-wfqt},
\be\label{u}
u_\l(x)=\F_\l(X_\l^*(x)).
\ee
In the case $\l=0$, this gradient flow is discussed by Givental~\cite{g}, where it is shown to be equivalent to
the Toda flow (with opposite sign) on its most degenerate iso-spectral manifold on which the eigenvalues of
the Lax matrix are all equal to zero.  In fact, the corresponding statement holds true for any $\l\in\R^n$, namely,
that the gradient flow \eqref{gradu} describes the Toda flow on the iso-spectral manifold corresponding to $\l$.
For more details, see Section~\ref{sec-toda} and Theorem~\ref{main1} below.

On the other hand, when $\epsilon= 0$, $\eta(t)=t\l$ almost surely.  This suggests that the image of the path 
$\eta(t)=t\l$ under $\Pi_n$ defines a solution to the Toda flow on the iso-spectral manifold corresponding to $\l_1,\ldots,\l_n$, 
and indeed this is the case, as we will show in Theorem~\ref{main1}.

For more general initial conditions, in the context of Theorem~\ref{noc2},
note that when $\epsilon=0$, \eqref{dyn-rsk-pi-sde} becomes 
\begin{eqnarray}\label{dyn-rsk1}
& \dot x^1_1=\l_1; \\ 
& \dot x^m_1=\dot x^{m-1}_1 + e^{x^m_2-x^{m-1}_1},\qquad \dot x^m_m=\l_m-e^{x^m_m-x^{m-1}_{m-1}},\qquad 2\le m\le n;\nn \\ \nn
& \dot x^m_i = \dot x^{m-1}_i + e^{x^m_{i+1}-x^{m-1}_i} - e^{x^m_i-x^{m-1}_{i-1}},\qquad 1< i< m\le n
\end{eqnarray}
and \eqref{dyn-sde} becomes
\begin{eqnarray}\label{dyn1}
& \dot x^1_1=\l_1;  \\ 
& \dot x^m_1=\l_m + e^{x^{m-1}_1-x^m_1},\qquad \dot x^m_m=\l_m-e^{x^m_m-x^{m-1}_{m-1}},\qquad 2\le m\le n;\nn \\ \nn
& \dot x^m_i = \l_m + e^{x^{m-1}_i-x^m_i} - e^{x^m_i-x^{m-1}_{i-1}},\qquad 1< i< m\le n.
\end{eqnarray}
Moreover, the fixed time marginals of the process $X(t)$, which are the same in either case, 
are concentrated on $\T_\l$.
As we will see, $\T_\l$ is stable under both of the flows defined by \eqref{dyn-rsk1} and \eqref{dyn1},
and on $\T_\l$, they are in fact equivalent.  Moreover, we will show that if $X(0)\in\T_\l$ and $X(t)$
evolves according to either/both, then $x^n(t)$ defines a solution to the Toda flow with opposite sign
on the iso-spectral manifold corresponding to $\l$.  This last statement can be interpreted as a semi-classical limit
of Theorem \ref{noc2}.  From this we will also deduce the semi-classical limit of Theorem \ref{noc1}.  
For precise statements, see Theorems \ref{main} and \ref{main1} below.

\section{The Toda lattice}\label{sec-toda}

The Toda lattice is completely integrable Hamiltonian system which has been
extensively studied in the literature, see for example the survey~\cite{ks}.
We will consider the Toda lattice with opposite sign, with Hamiltonian
\be
\frac12\sum_{i=1}^n p_i^2 - \sum_{i=1}^{n-1} e^{x_{i+1}-x_i}.
\ee
This is a special case of the {\em indefinite} Toda lattice~\cite{ks,ky}.
The equations of motion are
\begin{eqnarray}\label{toda1}
&\ddot x_1=-e^{x_2-x_1}\qquad \ddot x_n=e^{x_n-x_{n-1}};\\ \nn
&\ddot x_i=-e^{x_{i+1}-x_i}+e^{x_i-x_{i-1}},\qquad \qquad i=2,\ldots,n-1.
\end{eqnarray}
Writing $q_i=e^{x_{i+1}-x_i}$ for $1\le i\le n-1$, set
\be\label{M}
M=\begin{pmatrix} p_1 & 1 & 0 &  \cdots & 0\\
-q_1 & p_2 & 1 &  \cdots & 0\\
0 & -q_2 & \ddots &  \ddots&  \vdots\\
&&&&0\\
\vdots&&&\ddots&1\\
0&\dots&0&-q_{n-1}&p_n \end{pmatrix},
\ee
and 
\be
Q=\Pi_-(M)=\begin{pmatrix} 0 & 0    &\dots& 0\\
-q_1 & 0   &\dots& 0\\
\vdots   & \ddots &  & \vdots\\
&&0&0\\
0&\dots&-q_{n-1}&0 \end{pmatrix}.
\ee
Then $(M,Q)$ form a Lax pair, that is, 
\be\label{lax}
\dot{M} = [M,Q]
\ee
and \eqref{lax} is equivalent to the equations of motion \eqref{toda1} or, equivalently,
\begin{eqnarray}\label{em}
&\dot q_i = (p_{i+1}-p_i) q_i,\qquad i=1,\ldots,n-1;\\ \nn
&\dot p_1 = -q_1;\qquad \dot p_n = q_{n-1};\\ \nn
&\dot p_i = q_{i+1}-q_i,\qquad i=2,\ldots,n-1.
\end{eqnarray}
In particular, the eigenvalues $\lambda_1,\ldots,\lambda_n$ of $M$ form a complete set of integrals of 
motion for the system.  Denote by $\M$ the set of complex, tridiagonal, Hessenberg matrices and by
$\M_\l$ the subset of those matrices with eigenvalues given by $\l=(\lambda_1,\ldots,\lambda_n)$.

The relation to the usual Toda lattice with Hamiltonian 
$$\frac12\sum_{i=1}^n \pi_i^2 + \sum_{i=1}^{n-1} e^{\xi_{i+1}-\xi_i}$$
is as follows.  If $x(t)$ is a solution to the opposite sign Toda lattice which can be
analytically continued in the time variable $t$ then, formally at least, 
$\xi(t)=x(\iota t)$ defines a solution to the usual Toda lattice, that is
\begin{eqnarray}\label{toda}
&\ddot \xi_1=e^{\xi_2-\xi_1}\qquad \ddot \xi_n=-e^{\xi_n-\xi_{n-1}};\\ \nn
&\ddot \xi_i=e^{\xi_{i+1}-\xi_i}-e^{\xi_i-\xi_{i-1}},\qquad \qquad i=2,\ldots,n-1.
\end{eqnarray}

To solve \eqref{lax} for a given initial condition $M_0\in\M$ we write, for each $t\ge 0$, 
$$e^{tM_0}=n(t)r(t),$$ where $n(t)\in N_-$ and $r(t)\in B$, assuming for the moment that 
such a factorisation is possible.  Then
$$M(t)=n(t)^{-1}M_0 n(t)=r(t)M_0 r(t)^{-1}$$
defines a solution to \eqref{lax} with $M(0)=M_0$.  The matrices $n(t)$
and $r(t)$ evolve according to
$$\dot n = n Q,\qquad \dot r=P r,$$
where $P=M- Q$.  Denote the corresponding flow on $\M$
by $T_t$, so that $$M(t)=T_tM_0.$$  There may exist times at which the solution blows up,
as discussed for example in~\cite{efs,ks,ky}.

Finally we recall an important result of Kostant~\cite{k1}, 
namely that for any $M\in \M_\l$, there is a unique $L\in N_-$ such that
$$M=L^{-1}\epsilon_\l L .$$
Thus, if $M_0\in \M_\l$ with $M_0=L_0^{-1}\epsilon_\l L_0$, then we can write
$$M(t)=L(t)^{-1}\epsilon_\l L(t)$$
where $L(t)=L_0 n(t)$ evolves according to $\dot L = L Q$ with $L(0)=L_0$.

\section{Flows on triangles and upper triangular matrices}\label{flows}

We consider two flows on $\T$, which we denote by $S^\l_t$ and $\tilde S^\l_t$, and define by
\begin{eqnarray}\label{dyn-rsk}
& \dot x^1_1=\l_1; \\ 
& \dot x^m_1=\dot x^{m-1}_1 + e^{x^m_2-x^{m-1}_1},\qquad \dot x^m_m=\l_m-e^{x^m_m-x^{m-1}_{m-1}},\qquad 2\le m\le n;\nn \\ \nn
& \dot x^m_i = \dot x^{m-1}_i + e^{x^m_{i+1}-x^{m-1}_i} - e^{x^m_i-x^{m-1}_{i-1}},\qquad 1< i< m\le n
\end{eqnarray}
and
\begin{eqnarray}\label{dyn}
& \dot x^1_1=\l_1;  \\ 
& \dot x^m_1=\l_m + e^{x^{m-1}_1-x^m_1},\qquad \dot x^m_m=\l_m-e^{x^m_m-x^{m-1}_{m-1}},\qquad 2\le m\le n;\nn \\ \nn
& \dot x^m_i = \l_m + e^{x^{m-1}_i-x^m_i} - e^{x^m_i-x^{m-1}_{i-1}},\qquad 1< i< m\le n
\end{eqnarray}
respectively.
\begin{prop}\label{stable}
For each $\l\in\R^n$, $\T_\lambda$ is invariant under the flows $S^\l_t$ and $\tilde S^\l_t$ and,
moreover, on $\T_\l$ these flows are equivalent.
\end{prop}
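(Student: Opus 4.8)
The plan is to prove both claims simultaneously by a direct computation on the defining equations. The key observation is that the difference between the two systems \eqref{dyn-rsk} and \eqref{dyn} lies only in the equations for the entries $x^m_i$ with $i<m$: in \eqref{dyn-rsk} the right-hand side involves $\dot x^{m-1}_i$ and the term $e^{x^m_{i+1}-x^{m-1}_i}$, whereas in \eqref{dyn} it involves $\lambda_m$ and the term $e^{x^{m-1}_i-x^m_i}$. On $\T_\lambda$, the critical point equations \eqref{cp} relate precisely these quantities. So the first step is to rewrite, for $X\in\T_\lambda$, the right-hand side of each equation of \eqref{dyn-rsk} using \eqref{cp}, and check that it matches the right-hand side of \eqref{dyn} (and vice versa). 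Concretely, \eqref{cp} reads $\lambda_m+l^m_i(X)=\lambda_{m+1}+r^m_i(X)$; unwinding the definitions of $l^m_i$ and $r^m_i$, for $1\le i<m<n$ this says
$$\lambda_m+e^{x^{m+1}_{i+1}-x^m_i}+e^{x^{m-1}_i-x^m_i}=\lambda_{m+1}+e^{x^m_i-x^{m+1}_i}+e^{x^m_i-x^{m-1}_{i-1}},$$
with the obvious boundary modifications. Iterating this identity down the columns should convert the "RSK-type" recursion (which refers to row $m-1$) into the "Warren-type" local form (which refers only to $\lambda_m$ and neighbouring entries), establishing that the two vector fields coincide on $\T_\lambda$.

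For invariance, the cleanest approach is to show that the critical-point equations \eqref{cp} are preserved by the flow: if $X(0)\in\T_\lambda$ and $X(t)$ evolves by \eqref{dyn-rsk} (equivalently \eqref{dyn}), then differentiating the quantity $\lambda_m+l^m_i(X(t))-\lambda_{m+1}-r^m_i(X(t))$ in $t$ and substituting the equations of motion, I expect to find that this derivative vanishes identically once the relations \eqref{cp} themselves are assumed to hold at time $t$. In other words, $\T_\lambda$ is an invariant submanifold because the constraint functions have vanishing Lie derivative along the flow on the constraint set. Alternatively — and perhaps more in the spirit of the paper — one can argue via the correspondence $f:\P\to\T$ and Proposition~\ref{sr}: the flow $S^\lambda_t$ is $f\circ R^\eta_t\circ f^{-1}$ for $\eta(t)=t\lambda$, i.e. it is conjugate to the linear flow $\dot b=\epsilon_\lambda b$ on $\P$; the condition $X\in\T_\lambda$ should translate, under $f$, into a condition on $b$ that is manifestly preserved (related to $b$ lying in the set where the Gauss factorisation of $b\bar w_0$ produces an $L$ conjugating $M$ into $\epsilon_\lambda$, as in Kostant's theorem from Section~\ref{sec-toda}).

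The main obstacle I anticipate is the bookkeeping in the column-by-column induction that turns \eqref{dyn-rsk} into \eqref{dyn}: the equation for $\dot x^m_i$ in \eqref{dyn-rsk} contains $\dot x^{m-1}_i$, which is itself given recursively, so one must carefully telescope the contributions $e^{x^{k}_{i+1}-x^{k-1}_i}$ and the exponential terms coming from repeated application of \eqref{cp}, handling the boundary cases $i=1$ and $i=m-1$ separately. Getting the signs and the off-by-one indices right in the partial sums is the delicate part; everything else is routine substitution. I would organise the computation by fixing $i$ and inducting on $m$, using \eqref{cp} at level $(m-1,i)$ to rewrite $\dot x^{m-1}_i$, and verifying that the accumulated correction terms collapse to exactly $\lambda_m+e^{x^{m-1}_i-x^m_i}-e^{x^m_i-x^{m-1}_{i-1}}$.
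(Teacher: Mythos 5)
Your strategy matches the paper's proof: the equivalence of the two vector fields on $\T_\l$ is obtained by using \eqref{cp} to rewrite \eqref{dyn} in the form $\dot x^m_i = \l_{m+1} + e^{x^m_i-x^{m+1}_i} - e^{x^{m+1}_{i+1}-x^m_i}$ and substituting this (at level $m-1$) into \eqref{dyn-rsk}, and invariance is proved exactly as you propose, by checking that $\dot l^m_i = \dot r^m_i$ along the flow at points where the constraints \eqref{cp} hold. The column-by-column telescoping you anticipate as the main obstacle is not actually needed: a single application of \eqref{cp} per entry suffices, because the recursive term $\dot x^{m-1}_i$ appearing in \eqref{dyn-rsk} is disposed of by the same identity one level down.
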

\begin{proof}
We will first show that $\T_\l$ is stable under the dynamics \eqref{dyn-rsk}.  By \eqref{cp},
this is equivalent to showing that $\T_\l$ is stable under the dynamics \eqref{dyn}.
Suppose $X\in\T_\l$.  Then it follows from \eqref{dyn}, using \eqref{cp}, that
\be\label{dyn2}
\dot x^m_i = \l_{m+1} + e^{x^m_i-x^{m+1}_i} - e^{x^{m+1}_{i+1}-x^m_i},\qquad 1\le i\le m<n.
\ee
Using \eqref{dyn} and \eqref{dyn2} we have, for $1<i<m<n$,
\begin{eqnarray*}
\dot l^m_i &=& (\dot x^{m-1}_i-\dot x^m_i) e^{x^{m-1}_i-x^m_i} + (\dot x^{m+1}_{i+1}-\dot x^m_i) e^{x^{m+1}_{i+1}-x^m_i}\\
&=& \left( e^{x^m_i-x^{m-1}_{i-1}} - e^{x^m_{i+1}-x^{m-1}_i} \right) e^{x^{m-1}_i-x^m_i} 
+ \left( e^{x^m_{i+1}-x^{m+1}_{i+1}} - e^{x^m_i-x^{m+1}_i} \right)  e^{x^{m+1}_{i+1}-x^m_i}\\
&=& e^{x^{m-1}_i-x^{m-1}_{i-1}} - e^{x^{m+1}_{i+1}-x^{m+1}_i} ,
\end{eqnarray*}
and
\begin{eqnarray*}
\dot r^m_i &=& (\dot x^{m}_i-\dot x^{m-1}_{i-1}) e^{x^m_i-x^{m-1}_{i-1}} 
+ (\dot x^m_i-\dot x^{m+1}_{i}) e^{x^m_i-x^{m+1}_{i}}\\
&=& \left( e^{x^{m-1}_i-x^{m}_{i}} - e^{x^{m-1}_{i-1}-x^{m}_{i-1}} \right) e^{x^m_i-x^{m-1}_{i-1}} 
+ \left( e^{x^{m+1}_{i}-x^{m}_{i-1}} - e^{x^{m+1}_{i+1}-x^{m}_i} \right)  e^{x^m_i-x^{m+1}_{i}}\\
&=& e^{x^{m-1}_i-x^{m-1}_{i-1}} - e^{x^{m+1}_{i+1}-x^{m+1}_i} .
\end{eqnarray*}
Similarly, for $1\le m<n$, we obtain
$$\dot l^m_1= \dot r^m_1 = - e^{x^{m+1}_{2}-x^{m+1}_1},\qquad \dot l^m_m = \dot r^m_m = - e^{x^{m+1}_{m+1}-x^{m+1}_m}.$$
We conclude that $\dot l^m_i=\dot r^m_i$ for all $1\le i\le m<n$, which shows that $\T_\l$ is stable, as required.
\end{proof}

The flow $S^\l_t$ has a convenient representation in terms of the simple linear flow on $B$ defined by
$$R^\l_t b = e^{t\epsilon_\l}b.$$
\begin{prop}
On $\T$, $S^\l_t=f\circ R^\l_t\circ f^{-1}$.
\end{prop}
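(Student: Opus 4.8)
The plan is to recognize this as an immediate specialisation of Proposition~\ref{sr}. The first step is to observe that the flow $S^\l_t$ defined by \eqref{dyn-rsk} is exactly the flow $S^\eta_t$ of \eqref{dyn-rsk-pi} for the particular smooth input path $\eta(t)=t\l$: indeed, substituting $\dot\eta\equiv\l$ into \eqref{dyn-rsk-pi} reproduces \eqref{dyn-rsk} verbatim (the terms $\dot\eta_1,\dot\eta_m$ become $\l_1,\l_m$, and all the exponential terms are unchanged). This path satisfies $\eta(0)=0$, so Proposition~\ref{sr} applies and gives $R^\eta_t=f^{-1}\circ S^\eta_t\circ f$, hence $S^\l_t=f\circ R^\eta_t\circ f^{-1}$.

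It then remains to identify $R^\eta_t$ with the linear flow $R^\l_t b=e^{t\epsilon_\l}b$. By definition $R^\eta_t b_0=b(t)$, where $b(t)$ solves the matrix ODE \eqref{bdot-eta}, namely $\dot b=\epsilon(\dot\eta)\,b=\epsilon_\l b$ (using $\epsilon(\l)=\epsilon_\l$ and $\dot\eta\equiv\l$), with initial condition $b(0)=b_0$. Since $\epsilon_\l$ is a constant matrix, the unique solution is $b(t)=e^{t\epsilon_\l}b_0$, so $R^\eta_t=R^\l_t$ on $\P$ (and, as already noted in the construction of $R^\eta_t$, the flow preserves $\P$). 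Combining this with the previous step yields $S^\l_t=f\circ R^\l_t\circ f^{-1}$ on $\T$, as claimed.

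There is essentially no serious obstacle here: the statement is a corollary of the already-established conjugation identity of Proposition~\ref{sr} together with the explicit integration of a constant-coefficient linear system. The only point requiring a moment's care is the bookkeeping that confirms the two dynamical systems \eqref{dyn-rsk} and \eqref{dyn-rsk-pi}$|_{\dot\eta=\l}$ coincide, and that $\eta(t)=t\l$ meets the hypotheses ($C^\infty$ and vanishing at the origin) needed to invoke Proposition~\ref{sr}.
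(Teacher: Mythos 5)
Your proof is correct and follows exactly the paper's route: the paper's proof is the one-line observation that the claim follows from Proposition~\ref{sr} applied to $\eta(t)=t\l$, and you have simply spelled out the (straightforward) verifications that this specialisation entails.
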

\begin{proof}
This follows from Proposition \ref{sr}, taking $\eta(t)=t\l$.
\end{proof}

We will now explain how the restriction of the flow $S^\l_t$ to $\T_\l$ is related to the Toda flow on $\M_\l$.

Define a map $h:\T\to (N_-)_{>0}$, $X=(x^m_i)\mapsto L$, by
$$L=L_1(u^1) L_2(u^2) \dots L_{n-1}(u^{n-1}) ,$$
where
$$u^m_i=e^{x^{m+1}_{i+1}-x^{m}_i},\qquad 1\le i\le m<n.$$
Define another map $g_\l:\T\to\M$, $X=(x^m_i)\mapsto M$, where $M$ is defined by \eqref{M} with
$$q_i=e^{x^n_{i+1}-x^n_i},\qquad 1\le i\le n-1,$$
and $p=p^n$, where $p^m_i$, $1\le i\le m\le n$ are defined by
$$p^m_1=p^{m-1}_1+e^{x^m_2-x^{m-1}_1};\qquad p^m_m=\l_m -e^{x^n_n-x^{n-1}_{n-1}};$$
$$p^m_i=p^{m-1}_i+e^{x^m_{i+1}-x^{m-1}_i}-e^{x^m_i-x^{m-1}_{i-1}},\qquad 1<i<m.$$
Note that, if $X\in\T_\l$, then by~\eqref{cp}, we can write
$$p_1=\l_n+e^{x^{n-1}_1-x^n_1};\qquad p_n=\l_n -e^{x^n_n-x^{n-1}_{n-1}};$$
$$p_i=\l_n+e^{x^{n-1}_i-x^n_i}-e^{x^n_i-x^{n-1}_{i-1}},\qquad 1<i<n.$$

\begin{prop}\label{L}  
Let $X=(x^m_i)\in\T$, $M=g_\l(X)$ and $L=h(X)$.  If $X\in\T_\l$, then 
\be\label{crux}
M=L^{-1}\epsilon_\l L .
\ee 
\end{prop}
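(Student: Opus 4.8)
The plan is to prove the identity $M = L^{-1}\epsilon_\l L$ directly, by comparing the two sides entry-by-entry on the finitely many nonzero diagonals. Since $\epsilon_\l$ is upper bidiagonal and $L \in (N_-)_{>0}$ is lower unitriangular, the matrix $L^{-1}\epsilon_\l L$ is in principle full, so the first observation to make is that the right-hand side is automatically tridiagonal and Hessenberg once we know $X \in \T_\l$. In fact, rather than verifying tridiagonality by hand, I would pass through Kostant's theorem (quoted at the end of Section~\ref{sec-toda}): for each $M' \in \M_\l$ there is a \emph{unique} $L' \in N_-$ with $M' = (L')^{-1}\epsilon_\l L'$. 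So it suffices to (i) check that $g_\l(X) \in \M_\l$, i.e. that $g_\l(X)$ actually has spectrum $\l$, and (ii) identify the unique $L'$ producing it with $h(X)$.

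For step (i), the cleanest route is a conjugation/induction on $n$. Write $L = L_1(u^1)\cdots L_{n-1}(u^{n-1})$ and peel off the last factor: conjugating $\epsilon_\l$ by $L_{n-1}(u^{n-1})$ should turn the bottom $2\times 2$ block into the Toda form while leaving an $(n-1)$-dimensional problem of exactly the same shape, with the parameters $u^m_i$ and the $p^m_i$ matching the recursive definitions in the statement of the proposition (the nested formulas for $p^m_i$ in terms of $p^{m-1}_i$ are precisely what one expects from such a peeling). This is where the critical-point equations \eqref{cp} enter: they are exactly the relations needed to rewrite the naive conjugation output $\l_m + e^{x^{m-1}_i - x^m_i} - \cdots$ into the symmetric Toda form, and the computation $\dot l^m_i = \dot r^m_i$ carried out in the proof of Proposition~\ref{stable}, together with the alternative expression \eqref{dyn2}, shows these are consistent. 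Concretely, I would verify that $L_{n-1}(u^{n-1})\, g_\l^{(n-1)}(Y)\, L_{n-1}(u^{n-1})^{-1}$, suitably embedded, equals $\epsilon_\l$ conjugated appropriately — or run the induction in the other direction, building $M$ up.

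Alternatively — and this may be the shorter path — one notes that Proposition~\ref{umi} already gives, for $b \in \P$ with $X = f(b)$, that $b\bar w_0 = LDU$ with $L = h(X)$ and $D_{ii} = e^{x^n_i}$, and Proposition~\ref{sum} shows that under the flow $\dot b = \epsilon_\l b$ the matrix $L$ evolves by $\dot L = L\,\Pi_-(L^{-1}\epsilon_\l L)$ while $R = DU$ evolves by $\dot R = \epsilon(\dot x^n) R$. On $\T_\l$ the bottom row $x^n$ evolves by the Toda equations of motion \eqref{em} (this is what the next results of the paper establish), so $\epsilon(\dot x^n)$ is, up to the correction term, the matrix $P = M - Q$; matching $\dot L = LQ$ against $\dot L = L\Pi_-(L^{-1}\epsilon_\l L)$ forces $Q = \Pi_-(L^{-1}\epsilon_\l L)$, and then matching the diagonal and superdiagonal parts forces $M = L^{-1}\epsilon_\l L$ on the whole nonzero support. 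However, this argument has a logical-ordering hazard: it presumes the Toda equations for $x^n$, which are presumably proved \emph{using} this proposition, so I would keep the direct induction as the primary argument and relegate the dynamical viewpoint to a remark.

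The main obstacle I anticipate is bookkeeping in step (i): ensuring that the recursively-defined $p^m_i$ match the entries produced by the $L$-conjugation at every level, and that the cross terms $e^{x^m_i - x^{m-1}_{i-1}}$ versus $e^{x^{m+1}_{i+1} - x^m_i}$ line up with the right indices after the shift in row/column labels that the $\epsilon_\l$-conjugation induces. The critical-point equations \eqref{cp} are the only structural input beyond linear algebra, so the real work is verifying that \eqref{cp} is exactly strong enough to collapse the conjugation into tridiagonal form at each inductive step — which, given that Proposition~\ref{stable}'s proof already exhibits the relevant cancellations, I expect to go through after a page of careful index-chasing.
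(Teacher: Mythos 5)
Your primary argument --- induction on the rank, peeling off the last factor $L_m(u^m)$ and using the critical-point equations \eqref{cp} to collapse the conjugation into tridiagonal Toda form at each step, ``building $M$ up'' --- is exactly the paper's proof, and you are right both that \eqref{cp} is precisely the needed input and that the dynamical route via Proposition~\ref{sum} would be circular here. The detour through Kostant's uniqueness theorem is unnecessary (exhibiting the conjugation of $\epsilon_\l$ by $h(X)$ directly is the entire content of the proposition), but it is harmless.
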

\begin{proof}
For $u\in\C^m$, $1\le m< n$, define
$$K_m(u)=\begin{pmatrix} \delta^m(u) & 0\\ 0& I_{n-m-1}\end{pmatrix},$$
where
$$\delta^m(u)=\begin{pmatrix} 1 & 0 & 0& \dots & 0\\
u_1 & 1 & 0 & \dots & 0\\
0 & u_2 & 1 & \dots & 0\\
&&\ddots &&\\
0&\dots&0& u_m & 1\end{pmatrix}.$$
Note that
$$L_m(u)^{-1}=K_m(-u).$$

Let $X\in\T$ and define
$$u^m_i=e^{x^{m+1}_{i+1}-x^{m}_i},\qquad 1\le i\le m<n;$$
$$v^m_i=e^{x^m_i-x^{m+1}_i}, \qquad 1\le i\le m<n;$$
$$q^m_i=e^{x^m_{i+1}-x^m_i},\qquad 1\le i< m\le n;$$
$$p^m_1=p^{m-1}_1+e^{x^m_2-x^{m-1}_1};\qquad p^m_m=\l_m -e^{x^n_n-x^{n-1}_{n-1}};$$
$$p^m_i=p^{m-1}_i+e^{x^m_{i+1}-x^{m-1}_i}-e^{x^m_i-x^{m-1}_{i-1}},\qquad 1<i<m.$$
For $2\le m\le n$, 
write $\l^m=(\l_1,\ldots,\l_m)$, $L_m(u)=L^n_m(u)$, $K_m(u)=K^n_m(u)$, 
$$L^{(m)}=L^m(u^1)\dots L^m_{m-1}(u^{m-1}),$$ and  
$$M^{(m)} =\begin{pmatrix} p^m_1 & 1 & 0 &  \cdots & 0\\
-q^m_1 & p^m_2 & 1 &  \cdots & 0\\
0 & -q^m_2 & \ddots &  \ddots&  \vdots\\
&&&&0\\
\vdots&&&\ddots&1\\
0&\dots&0&-q^m_{m-1}&p^m_m \end{pmatrix}.$$
For $v\in\C^m$ write
$$\epsilon^m(v)=\begin{pmatrix} v_1 & 1  &0  &\dots& 0\\
0 & v_2 & 1  &\dots& 0\\
\vdots &  & \ddots &  & \vdots\\
&&&v_{m-1} &1\\
0&&\dots&&v_m \end{pmatrix}.$$

First we will show that $X\in\T_\l$ implies \eqref{crux}.
We prove this by induction.  

For $m=2$, write $q=q^2_1$, $p_i=p^2_i$, $u=u^1_1$, $v=v^1_1$. 
Note that $q=uv$, $p_1=\l_1+u$, $p_2=\l_2-u$ and, by~\eqref{cp}, $\l_1+u=\l_2+v$.
Then
\begin{eqnarray*}
(L^{(2)})^{-1}\epsilon^2(\l) L^{(2)} &=&
\begin{pmatrix} 1 & 0\\ -u & 1 \end{pmatrix}
\begin{pmatrix} \l_1 & 1\\ 0 & \l_2 \end{pmatrix}
\begin{pmatrix} 1 & 0\\ u & 1 \end{pmatrix}\\
&=& \begin{pmatrix} \l_1+u & 1\\ -u\l_1+\l_2(\l_2-u) & \l_2-u \end{pmatrix}\\
&=& \begin{pmatrix} \l_1+u & 1\\ -q & \l_2-u \end{pmatrix}= M^{(2)},
\end{eqnarray*}
as required.

Now fix $2\le m<n$.  
For $A\in\C^{m\times m}$, write
$$H_m(A)=\begin{pmatrix} &&&& 0\\
&A&&& \vdots\\
&&&& 0\\
&&&&1\\
0&&\dots&&0 \end{pmatrix}.$$
Note that 
$$L_m^{m+1}(u^m)^{-1}=K^{m+1}_m(-u^m) = \delta^m(-u^m)$$ and 
$$u^m_i v^m_i=q^{m+1}_i,\qquad 1\le i<m.$$
Also, since $X\in\T_\l$, \eqref{cp} implies
$$p^{m+1}_1=\l_{m+1}+v^{m}_1;\qquad p^{m+1}_{m+1}=\l_{m+1} -u^{m}_{m};$$
$$p^{m+1}_i=\l_{m+1}+v^{m}_i-u^{m}_{i-1},\qquad 1<i<m+1.$$
Hence
$$M^{(m+1)}-\l_{m+1}I_{m+1}=L^{m+1}_m(u^m)^{-1} H_m(\epsilon^m(v^m)),$$
and so
\be\label{xx}
L^{m+1}_m(u^m) [ M^{(m+1)}-\l_{m+1}I_{m+1} ] L^{m+1}_m(u^m)^{-1} = H_m(\epsilon^m(v^m)) L^{m+1}_m(u^m)^{-1}.
\ee
Now, by \eqref{cp}, 
$$p^m_i=\l_{m+1}+v^m_i-u^m_i,\qquad 1\le i\le m.$$  
Using this, and 
$$u^m_i v^m_{i+1} =q^m_i,\qquad 1\le i<m,$$
the right hand side of \eqref{xx} becomes
$$H_m(\epsilon^m(v^m)) L^{m+1}_m(u^m)^{-1} = H_m(M^{(m)}-\l_{m+1}I_{m}).$$
By the induction hypothesis,
$$M^{(m)}=(L^{(m)})^{-1} \epsilon^m(\l^m) L^{(m)}.$$
Hence, using
$$H_m( \epsilon^m(\l^m) -\l_{m+1}I_{m} ) = \epsilon^{m+1}(\l^{m+1}) - \l_{m+1}I_{m+1}$$
and
$$H_m((L^{(m)})^{-1} A L^{(m)} ) = [L^{m+1}_1(u^1)\dots L^{m+1}_{m-1}(u^{m-1})]^{-1} 
H_m(A) L^{m+1}_1(u^1)\dots L^{m+1}_{m-1}(u^{m-1}), $$
we conclude that
$$M^{(m+1)} = (L^{(m+1)})^{-1} \epsilon^{m+1}(\l^{m+1}) L^{(m+1)},$$
as required.

Now we will show that \eqref{crux} implies $X\in\T_\l$, again by induction.  Write 
Write $\T=\T^n$, $\T_\l=\T_\l^n$ to emphasize their dependence on $n$.  
For $1\le m\le n$, write $X^m=(x^1,\ldots,x^m)$.  

\end{proof}

\begin{rem} It can be shown that, in fact, $X\in\T_\l$ if, and only if, 
$$M^{(m)}=(L^{(m)})^{-1} \epsilon^{m}(\l^{m}) L^{(m)},$$
for each $2\le m\le n$.
\end{rem}

In the next Theorem, in the case $\l=0$, the formula \eqref{giv1} and the gradient flow 
representation \eqref{giv2} are due to Givental~\cite{g}.

\begin{thm}\label{main}
Let $x,\l\in\R^n$ and define $X(t) = (x^m_i(t)) = S^\l_t X(0)$ where $X(0)\in\T_\l$.  Let $M(t)=g_\l(X(t))$,
$b(t)=f^{-1}(X(t))$, and $$L=L_1(u^1) L_2(u^2) \dots L_{n-1}(u^{n-1}) ,$$
where
$$u^m_i=e^{x^{m+1}_{i+1}-x^{m}_i},\qquad 1\le i\le m<n.$$
Let $Q=\Pi_-(M)$ and $P=M-Q$.
Then:
\begin{itemize}
\item[\rm (i)]   For all $t\ge 0$, $$M(t)=L(t)^{-1}\epsilon_\l L(t)$$ 
and we have the Gauss decompositions
$$b(t) \bar w_0 = L(t) R(t),\qquad e^{t M(0)}=n(t) r(t)$$
where $n(t)=L(0)^{-1} L(t)$, $R(t)\in B$, $r(t)=R(t) R(0)^{-1}$ and these satisfy
$$\dot L = L Q,\qquad \dot R=P R,\qquad \dot n = n Q, \qquad \dot r=P r.$$
In particular, $M(t)$ defines a solution to the Toda flow on $\M_\l$. 
\item[\rm (ii)] The eigenvalues $\l_1,\ldots,\l_n$ are given by
\be
\l_1=\dot x^1_1,\qquad \l_m=\sum_{i=1}^m \dot x^m_i - \sum_{i=1}^{m-1} \dot x^{m-1}_i, \quad 2\le m\le n.
\ee
\item[\rm (iii)] Writing $x=x(t)=x^n(t)$,
\be\label{giv1}
\F(X)=(n-1)\dot x_1+(n-3) \dot x_2 + \cdots + (1-n) \dot x_n
\ee
and
\be\label{giv2}
\dot x = -\nabla_x u_\l(x).
\ee
\end{itemize}
\end{thm}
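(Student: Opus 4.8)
The plan is to verify the three parts in the order (i), (ii), (iii), using the machinery already assembled: Proposition~\ref{sr} (which identifies $S^\l_t$ with the linear flow $b\mapsto e^{t\epsilon_\l}b$ conjugated by $f$), Proposition~\ref{sum} (which gives the evolution of the $L$ and $R$ factors in the Gauss decomposition of $b\bar w_0$), Proposition~\ref{L} (which supplies $M=L^{-1}\epsilon_\l L$ when $X\in\T_\l$), and Proposition~\ref{stable} (which guarantees $X(t)\in\T_\l$ for all $t$, so Proposition~\ref{L} keeps applying). For part (i), since $X(0)\in\T_\l$ and $\T_\l$ is stable under $S^\l_t$, Proposition~\ref{L} gives $M(t)=L(t)^{-1}\epsilon_\l L(t)$ for all $t$. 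Taking $\eta(t)=t\l$ in Proposition~\ref{sum}, and noting $\dot b=\epsilon_\l b$, we get $\dot L=L\,\Pi_-(L^{-1}\epsilon_\l L)=L\,\Pi_-(M)=LQ$ and $\dot R=\epsilon(\dot x^n)R$; identifying $\epsilon(\dot x^n)$ with $P=M-Q$ requires checking that the diagonal of $M$ (the vector $p=p^n$ defined via $g_\l$) equals $\dot x^n$, which follows by comparing the defining recursion for $p^m_i$ in $g_\l$ with the dynamics \eqref{dyn-rsk} — this is essentially part (ii). Differentiating $M=L^{-1}\epsilon_\l L$ and substituting $\dot L=LQ$ yields $\dot M=[M,Q]$, i.e.\ the Lax equation, so $M(t)$ solves the Toda flow on $\M_\l$ (the spectrum is constant $=\l$). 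The statements about $e^{tM(0)}=n(t)r(t)$, $n(t)=L(0)^{-1}L(t)$, $r(t)=R(t)R(0)^{-1}$ then follow from the standard factorisation-method solution recalled in Section~\ref{sec-toda}: one checks $n(t)r(t)$ so defined satisfies $\frac{d}{dt}(nr)=M_0\,nr$ with value $I$ at $t=0$, hence equals $e^{tM_0}$; here one uses that $L(0)^{-1}\epsilon_\l L(0)=M(0)=M_0$ and that $Lr$-type manipulations give $\dot n=nQ$, $\dot r=Pr$.

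For part (ii), I would simply read off the eigenvalue formula from the dynamics. By \eqref{dyn-rsk}, $\dot x^1_1=\l_1$; and summing the equations for $\dot x^m_i$ over $i=1,\dots,m$, the telescoping of the $e^{x^m_{i+1}-x^{m-1}_i}$ and $-e^{x^m_i-x^{m-1}_{i-1}}$ terms leaves $\sum_i\dot x^m_i=\sum_i\dot x^{m-1}_i+\l_m-\dot\eta\text{-correction}$; more precisely, with $\eta(t)=t\l$ one gets $\sum_{i=1}^m\dot x^m_i-\sum_{i=1}^{m-1}\dot x^{m-1}_i=\l_m$. (Alternatively, this is exactly the relation $\mu_m$ from \eqref{pi} differentiated, or it follows from \eqref{det1} together with $\dot b=\epsilon_\l b$, since $\det\Delta^m_m(b)$ has logarithmic derivative equal to the trace of the relevant block of $\epsilon_\l$.) This also closes the gap left in part (i): it shows $p^n=\dot x^n$, hence $\dot R R^{-1}=\epsilon(\dot x^n)=\epsilon(p^n)=P$.

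For part (iii): formula \eqref{giv1} should come from expressing $\F(X)$ in terms of the dynamics. Note $\F(X)=\sum_{1\le i\le m<n}\big(e^{x^{m+1}_{i+1}-x^m_i}+e^{x^m_i-x^{m+1}_i}\big)$; when $X\in\T_\l$, the critical-point relations \eqref{cp} let one rewrite each $e^{x^{m+1}_{i+1}-x^m_i}$ and $e^{x^m_i-x^{m+1}_i}$ in terms of the velocities $\dot x^m_i$ appearing in \eqref{dyn-rsk} and \eqref{dyn2}. Summing over the array and telescoping, the interior terms cancel and one is left with a weighted sum of the bottom-row velocities $\dot x^n_i=\dot x_i$; tracking the multiplicities (each $x_i$ is "used" by the rows above it with alternating sign) produces the coefficients $n-1, n-3,\dots,1-n$. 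A cleaner route: \eqref{giv1} is equivalent to $\F(X)=\sum_{m=1}^{n-1}\big(\sum_{i\le m}\dot x^m_i+\text{something}\big)$, and one can prove it inductively on $n$ using the recursive structure of $\Pi_n$; but I expect the direct telescoping computation to be the most transparent. Finally, \eqref{giv2}: by definition \eqref{u}, $u_\l(x)=\F_\l(X^*_\l(x))$ where $X^*_\l(x)$ is the unique critical point of $\F_\l$ on $\T(x)$; since $X(t)\in\T_\l$ means $X(t)=X^*_\l(x^n(t))$, the envelope theorem (differentiating through the critical point, where the $\partial/\partial x^m_i$-derivatives vanish for $i\le m<n$) gives $\nabla_x u_\l(x)=\nabla_x\F_\l\big|_{X^*_\l(x)}$, evaluated only through the explicit $x=x^n$ dependence. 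From \eqref{Flambda}, $\partial\F_\l/\partial x^n_i=-\l_i\cdot(\text{coeff})+\partial\F/\partial x^n_i$, and $\partial\F/\partial x^n_i=e^{x^n_i-x^{n-1}_{i-1}}+e^{x^n_i-x^{n-1}_i}-e^{x^{n-1}_i-x^n_i}-e^{x^{n-1}_{i-1}-x^n_i}$ (with boundary omissions); comparing with the $\dot x^n_i$ from \eqref{dyn-rsk} rewritten via \eqref{cp} gives $\dot x_i=-\partial u_\l/\partial x_i$.

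The main obstacle I anticipate is bookkeeping: matching the three slightly different-looking descriptions of the bottom-row dynamics — the $p^n_i$ from $g_\l$, the velocities in \eqref{dyn-rsk}, and the gradient $-\nabla_x u_\l$ — requires careful and repeated use of the critical-point equations \eqref{cp} and attention to the boundary cases ($i=1$ and $i=m$), where various exponential terms drop out. The conceptual content is light (it is all telescoping plus the envelope theorem), but getting the signs and the edge terms right across parts (i)–(iii) simultaneously is where the real work lies; organising the computation around the identity $\dot x^n=p^n$ and the stability of $\T_\l$ should keep it manageable.
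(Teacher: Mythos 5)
Your proposal is correct and, for parts (i) and (ii), follows essentially the same route as the paper: stability of $\T_\l$ (Proposition~\ref{stable}) plus Proposition~\ref{L} give $M(t)=L(t)^{-1}\epsilon_\l L(t)$, Proposition~\ref{sum} with $\eta(t)=t\l$ gives $\dot L=LQ$ and $\dot R=\epsilon(\dot x^n)R$, the identification $\dot x^n=p^n$ closes the loop, and (ii) is read off from \eqref{dyn-rsk}. For part (iii) the paper proves both \eqref{giv1} and \eqref{giv2} by induction on the number of rows, writing $\F^m=\F^{m-1}+\E^m$ and $u^m_{\l^m}(x)=u^{m-1}_{\l^{m-1}}(y)+\E^m_{\l^m}$ and using the cancellation $\partial u^{m-1}/\partial y_j=-\dot y_j=-\partial\E^m_{\l^m}/\partial y_j$; your one-shot envelope-theorem argument is a legitimate global repackaging of the same idea (the paper's cancellation \emph{is} the row-by-row envelope theorem), and your telescoping alternative for \eqref{giv1} also works. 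One small slip to fix when you write it out: your formula for $\partial\F/\partial x^n_i$ has four terms, but from \eqref{F} each bottom-row variable $x^n_i$ occurs in only two summands, giving $\partial\F/\partial x^n_i=e^{x^n_i-x^{n-1}_{i-1}}-e^{x^{n-1}_i-x^n_i}$ (with the obvious boundary omissions); with this corrected expression, $\partial\F_\l/\partial x^n_i=-\l_n+e^{x^n_i-x^{n-1}_{i-1}}-e^{x^{n-1}_i-x^n_i}=-\dot x^n_i$ by \eqref{dyn}, exactly as you intend.
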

\begin{proof}

(i) First note that, since $b(t)\in\P$ for all $t\ge 0$, by Proposition~\ref{umi},
we have the Gauss decomposition $b(t) \bar w_0=L(t)R(t)$ where $L(t)$ is defined as in the statement of the Theorem.
It follows from Propositions~\ref{stable} and \ref{L} that $M(t)=L(t)^{-1}\epsilon_\l L(t)$ for all $t\ge 0$.  
Thus, by Proposition~\ref{sum}, $\dot L=LQ$, which implies the Lax equation $\dot M=[M,Q]$.
We also have from Proposition~\ref{sum} that $\dot R=PR$.  Applying Proposition~\ref{L} at $t=0$,
we have $M(0)=L(0)^{-1}\epsilon_\l L(0)$; it follows, using $$L(t)R(t)=e^{t\epsilon_\l} L(0)R(0),$$ that
$$e^{t M(0)}=L(0)^{-1}e^{t\epsilon_\l} L(0)=L(0)^{-1}L(t) R(t) R(0)^{-1}.$$
Moreover, defining $n(t)=L(0)^{-1} L(t)$ and $r(t)=R(t) R(0)^{-1}$, we have $\dot n = n Q$ and $\dot r=P r$,
as required.

Here is another, simple direct proof that $x=x^n$ satisfies the Toda equations \eqref{toda1}.
For convenience, write $y=x^{n-1}$.  First suppose $1<i<n$.  By \eqref{dyn},
$$\dot x_i = \l_n + e^{y_i-x_i} - e^{x_i-y_{i-1}}$$
and, by \eqref{dyn2},
$$\dot y_i = \l_n + e^{y_i-x_i} - e^{x_{i+1}-y_i},\qquad \dot y_{i-1} = \l_n + e^{y_{i-1}-x_{i-1}} - e^{x_i-y_{i-1}}.$$
Hence
\begin{eqnarray*}
\ddot x_i &=& (\dot y_i-\dot x_i) e^{y_i-x_i} + (\dot y_{i-1}-\dot x_i) e^{x_i-y_{i-1}} \\
&=& \left( e^{x_i-y_{i-1}} - e^{x_{i+1}-y_i}\right) e^{y_i-x_i} + \left( e^{y_{i-1}-x_{i-1}} - e^{y_i-x_i}\right) e^{x_i-y_{i-1}} \\
&=& -e^{x_{i+1}-x_i}+e^{x_i-x_{i-1}}.
\end{eqnarray*}
For $i=1$, we have 
$$\dot x_1=\l_n+e^{y_1-x_1},\qquad \dot y_1=\l_n+e^{y_1-x_1}-e^{x_2-y_1},$$ 
and hence
$$\ddot x_1 = (\dot y_1-\dot x_1) e^{y_1-x_1} =-e^{x_2-x_1};$$
for $i=n$, 
$$\dot x_n=\l_n-e^{x_n-y_{n-1}},\qquad \dot y_{n-1}=\l_n-e^{x_n-y_{n-1}}+e^{y_{n-1}-x_{n-1}},$$ 
and we obtain
$$\ddot x_n = (\dot y_{n-1}-\dot x_n) e^{x_n-y_{n-1}}  = e^{x_n-x_{n-1}},$$
as required.  

Part (ii) follows immediately from \eqref{dyn-rsk}.

(iii) We will prove \eqref{giv1} and \eqref{giv2} by induction. 
Write $Y_\l^n$, $\T=\T^n$, $\T_\l=\T_\l^n$, $\F_\l^n$ and $u_\l^n$ to emphasize 
their dependence on $n$.  
For $1\le m\le n$, write $\l^m=(\l_1,\ldots,\l_m)$ and $X^m=(x^1,\ldots,x^m)\in\T_{\l^m}$.  
We will show that, for each $1\le m\le n$,
\be\label{giv1a}
\F^m(X^m)=(m-1)\dot x^m_1+(m-3) \dot x^m_2 + \cdots + (1-m) \dot x^m_m
\ee
and
\be\label{giv2a}
\dot x^m = -\nabla_{x^m} u^m_{\l^m}(x^m).
\ee
First we show \eqref{giv1a}.  
For $m=1$, $\F(X)=0$ and the result holds trivially. 
Let $m>1$ and write $x=x^m$ and $y=x^{m-1}$.  
$$\F^2(X^2)=e^{x_2-y_1}+e^{y-x_1}.$$
By \eqref{dyn}, $\dot x_1=\l_2+e^{y-x_1}$ and $\dot x_2=\l_2-e^{x_2-y}$, and so
$\F^2(X^2)=\dot x_1-\dot x_2$, as required.
Now suppose $m> 2$.  Note that 
$$\F^m(X^m)=\F^{m-1}(X^{m-1})+\E^m,$$ where
$$\E^m=e^{y_1-x_1}+e^{x_2-y_1}+\cdots+e^{x_m-y_{m-1}}.$$
By the induction hypothesis,
$$\F^{m-1}(X^{m-1})=(m-2) \dot y_1+(m-4)\dot y_2 \cdots (2-m)\dot y_{m-1}.$$
Adding $\E_m$ and using \eqref{dyn2}, then \eqref{dyn}, gives
\begin{eqnarray*}
\F^m(X^m) &=& (m-1) e^{y_1-x_1} + (m-3) (e^{y_2-x_2}-e^{x_2-y_1})+\cdots+(1-m)(-e^{x_m-y_{m-1}})\\
&=& (m-1)\dot x_1+(m-3) \dot x_2 + \cdots + (1-m) \dot x_m,
\end{eqnarray*}
as required.  

Now we will prove \eqref{giv2a}.  For $m=1$, $u_\l(x)=-\l_1 x^1_1$ and the result holds trivially. 
Let $m>1$ and write $x=x^m$ and $y=x^{m-1}$.  Set
$$\E^m_a=\E^m+a\left(\sum_{j=1}^{m-1}y_j-\sum_{i=1}^m x_i\right),$$
and note that
$$u^m_{\l^m}(x)=u^{m-1}_{\l^{m-1}}(y)+\E^m_{\l^m}.$$
First suppose $1<i<m$.  Note that
$$\frac{\partial \E^m_{\l^m}}{\partial x_i} = \sum_{j=1}^{m-1} \frac{\partial \E^m_{\l^m}}{\partial y_j} 
\frac{\partial y_j}{\partial x_i} + e^{x_i-y_{i-1}}-e^{y_i-x_i} -\l_m.$$
On the other hand, by the induction hypothesis and \eqref{dyn2},
$$\frac{\partial u^{m-1}_{\l^{m-1}}(y)}{\partial y_j} 
= -\dot y_j = -\l_m - e^{y_i-x_i}+e^{x_{i+1}-y_i} 
=  - \frac{\partial \E^m_{\l^m}}{\partial y_j}.$$
Thus,
\begin{eqnarray*}
\partial_{x_i} u_\l(x) &=& \sum_{j=1}^{m-1} \frac{\partial u^{m-1}_{\l^{m-1}}(y)}{\partial y_j}  \frac{\partial y_j}{\partial x_i} 
+ \sum_{j=1}^{m-1} \frac{\partial \E^m_{\l^m}}{\partial y_j}  \frac{\partial y_j}{\partial x_i} + e^{x_i-y_{i-1}}-e^{y_i-x_i} -\l_m \\
&=& e^{x_i-y_{i-1}}-e^{y_i-x_i} -\l_m = -\dot x_i,
\end{eqnarray*}
as required.
The cases $i=1$ and $i=m$ are similar.
\end{proof}

We note that, as this is a recursive construction, implicit in the statement of Theorem~\ref{main} is the 
statement that, for each $m\le n$, $x^m(t)$ defines a solution to the $m$-particle Toda flow 
on the iso-spectral manifold corresponding to $\l_1,\ldots,\l_m$.  

Note, in particular, the above shows that $T_t \circ g_\l = g_\l \circ S^\l_t$ on $\T_\l$.
To summarise, if we let $\P_\l=f^{-1}\T_\l$, then the following diagram commutes:

\begin{center}
\begin{tikzpicture}
\node at (1,2) {$\P_\l$};
\draw [->] (1.5,2)--(2.5,2);
\node at (2,2.3) {$f$};
\node at (3,2) {$\T_\l$};
\draw [->] (3.5,2)--(4.5,2);
\node at (4,2.3) {$g_\l$};
\node at (5,2) {$\M_\l$};
\draw [->] (1,1.5)--(1,0.5);
\node at (.5,1) {$R^\l_t$};
\node at (1,0) {$\P_\l$};
\draw [->] (1.5,0)--(2.5,0);
\node at (2,.3) {$f$};
\node at (3,0) {$\T_\l$};
\node at (4,.3) {$g_\l$};
\node at (5,0) {$\M_\l$};
\draw [->] (3.5,0)--(4.5,0);
\draw [->] (3,1.5)--(3,0.5);
\draw [->] (5,1.5)--(5,0.5);
\node at (2.6,1) {$S^\l_t$};
\node at (4.7,1) {$T_t$};
\end{tikzpicture}
\end{center}
In particular, the semi-classical limit of the commutative diagram shown at the end of Section~\ref{scl} is:

\begin{center}
\begin{tikzpicture}
\node at (2.7,2) {$\T_\l$};
\draw [->] (4.5,2)--(3.5,2);
\node at (4,2.3) {$X^*_\l$};
\node at (5.3,2) {$\R^n$};
\node at (2.7,0) {$\T_\l$};
\node at (4,.3) {$\pi$};
\node at (5.3,0) {$\R^n$};
\draw [->] (3.5,0)--(4.5,0);
\draw [->] (2.7,1.5)--(2.7,0.5);
\draw [->] (5.3,1.5)--(5.3,0.5);
\node at (2.3,1) {$S^\l_t$};
\node at (5.7,1) {$T^\l_t$};
\end{tikzpicture}
\end{center}
where now $T^\l_t$ denotes the gradient flow defined by \eqref{giv2} and, with a slight abuse of notation,
$\pi$ denotes the projection $\pi: X=(x^m_i)\mapsto x^n$.

\begin{ex}  Suppose $n=2$ and $x=\l=0$.  Then, on $\T(x)$, 
$$\F_0(X)=\F(X)=e^{-x^1_1}+e^{x^1_1}.$$
This has its unique critical point at $x^1_1=0$, and so
$$X(0)= \begin{array}{ccc} &x^1_1(0)&\\x^2_2(0)&&x^2_1(0)\end{array}
=X_0^*(0,0)=\begin{array}{ccc} &0&\\0&&0\end{array}.$$
Setting $b(0)=f^{-1}(X_0^*(0,0))$, this implies
$$\log\Delta^2_1(b(0))=\log\Delta^2_2(b(0))=\log\Delta^1_1(b(0))=0$$
and hence
$$b(0)=\begin{pmatrix}1&1\\0&1\end{pmatrix}.$$
Now,
$$\epsilon_0=\begin{pmatrix}0&1\\0&0\end{pmatrix},\qquad e^{t\epsilon_0}=\begin{pmatrix}1&t\\0&1\end{pmatrix},$$
and hence
$$b(t)=e^{t\epsilon_0}b(0)=\begin{pmatrix}1&1+t\\0&1\end{pmatrix}.$$
Now, applying the map $f$ again gives
$$X(t)= \begin{array}{ccc} &0&\\-\log(1+t)&&\log(1+t)\end{array},$$
and hence
$$x^2(t)=(\log(1+t),-\log(1+t)).$$
Note that this gives
$$p_1=\frac1{1+t},\qquad p_2=-\frac1{1+t},\qquad q=e^{x^2_2-x^2_1}=\frac1{(1+t)^2}.$$
For the usual Toda lattice this gives the solution
$$\xi(t)=(\log(1+\iota t),-\log(1+\iota t)).$$
The symmetric form of the Lax matrix in this case is given by
$$ \Lambda = \begin{pmatrix} \pi_1 & e^{(\xi_2-\xi_1)/2} \\ e^{(\xi_2-\xi_1)/2} & \pi_2\end{pmatrix} =
\begin{pmatrix} \frac{\iota}{1+\iota t} & \frac1{1+\iota t}\\ \frac1{1+\iota t} & -  \frac{\iota}{1+\iota t}\end{pmatrix}$$
which, at $t=0$, is given by
$$\Lambda(0) = \begin{pmatrix} \iota & 1 \\ 1 & -\iota \end{pmatrix} .$$
\end{ex}
\begin{ex}
Suppose $n=2$, $x=(x,-x)$ and $\l=(\l,-\l)$.  Then
$$X_\l^*(x)=\begin{array}{ccc} &y&\\-x&&x\end{array},$$
where $y\in\R$ is the unique solution to
$$\l+e^{-x-y}=-\l+e^{y-x},$$
that is
$$e^{-y}=\sqrt{\l^2 e^{2x}+1}-\l e^x.$$
Now
$$b(0)=\begin{pmatrix}e^y&e^x\\0&e^{-y}\end{pmatrix},\qquad
\epsilon_\l=\begin{pmatrix}\l&1\\0&-\l\end{pmatrix},$$
$$e^{t\epsilon_\l}=\begin{pmatrix}e^{\l t}&\frac1\l\sinh(\l t)\\0&e^{-\l t}\end{pmatrix},$$
and hence
$$b(t)=e^{t\epsilon_\l} b(0)=\begin{pmatrix}e^{y+\l t} &e^{x+\l t}+e^{-y} \frac1\l\sinh(\l t)\\0&e^{-y-\l t}\end{pmatrix}.$$
The solution is thus given by
$$x^2_1(t)=\log\left( e^{x+\l t}+e^{-y} \frac1\l\sinh(\l t) \right),$$
and $x^2_2(t)=-x^2_1(t)$.  Note that
$$q(t)=e^{x^2_2(t)-x^2_1(t)}=\left( e^{x+\l t}+e^{-y} \frac1\l\sinh(\l t) \right)^{-2}.$$
Note that, when $x\to-\infty$, $y\to 0$ and $b(0)\to I$, and the solution becomes
$$x^2_1(t)=\log\left( \frac1\l\sinh(\l t) \right),$$
as discussed in Example~\ref{exs} below.
\end{ex}

Finally, we consider the flow $R^\l_t$ on $B$ started from the identity.
\begin{thm}\label{main1}
Let $\l\in\R^n$ and $b(t)=e^{t\epsilon_\l}$.  Note that $f(b)=\Pi\eta$, where $\eta(t)=t\l$.
Then $\Pi\eta(t)\in \T_\l$ for $t>0$ and $g_\l(\Pi\eta(t)),\ t>0$ defines a solution to the Toda
flow on $\M_\l$ which is singular at $t=0$. All of the other conclusions of Theorem~\ref{main} 
also hold for $t>0$ with $X(t)=(x^m_i(t))=\Pi\eta(t)$.  
In this case, $b(t)$ is given explicitly by
$b_{ii}(t)=e^{\l_i t}$ and, for $i<j$,
\begin{eqnarray}\label{f-b}
b_{ij}(t) &=& \sum_{i\le k\le j} \left[ \prod_{l=i,\ldots , j;\ l\ne k} (\l_k-\l_l)^{-1}\right] e^{\l_k t}\\
&=&\nn \frac1{2\pi\iota} \oint \frac{e^{tz}dz}{\prod_{i\le k\le j} (z-\l_k)} ,
\end{eqnarray}
where the integration is anti-clockwise around a circle containing $\l_i,\ldots,\l_j$. 
Alternatively, for $i<j$, we can write
\be\label{alt}
b_{ij}(t)=\left[ \prod_{i\le k<l\le j} (\l_k-\l_l)\right]^{-1} 
\begin{vmatrix} e^{\l_i t} &\l_i^{j-i-1}&\dots&\l_i&1\\
e^{\l_{i+1} t} &\l_{i+1}^{j-i-1}&\dots&\l_{i+1}&1\\
&&\vdots&&\\
e^{\l_j t} &\l_j^{j-i-1}&\dots&\l_j&1\end{vmatrix} .
\ee
We recall that, by definition,
$$x^n_1+\cdots+x^n_k=\log\tau_{k},\qquad 1\le k\le n,$$
where $\tau_1=b_{1n}$ and, for $2\le k\le n$,
\be\label{f-tau}
\tau_k=\begin{vmatrix} b_{1,n-k+1} & \dots & b_{1,n-1} & b_{1n} \\
b_{2,n-k+1} & \dots  & b_{2,n-1} & b_{2n}\\
&\vdots&&\\
b_{k,n-k+1} & \dots &b_{k,n-1} &b_{kn} \end{vmatrix}.
\ee
Writing $\tau_1=\tau$, this solution can also be expressed in the more familiar form
\be\label{f1-tau}
\tau_k=\begin{vmatrix} \tau^{(k-1)} & \dots & \tau' & \tau \\
\tau^{(k)} & \dots  & \tau'' & \tau'\\
&\vdots&&\\
\tau^{(2k-2)} & \dots &&\tau^{(k-1)}\end{vmatrix}.
\ee
As matrix integrals, assuming $|\l_i|<1$ for each $i$, 
\be\label{mi-tau}
\tau_{k}=\int_{U(k)} \frac{(\det M)^{k-1}e^{t\mbox{tr} M}}{\prod_{i=1}^n \det (M-\lambda_i I)}  dM.
\ee
When $\l=0$, 
\be\label{f2-tau}
\tau_{k}=\frac{(k-1)!(k-2)!\ldots 1}{(n-1)!(n-2)!\ldots(n-k)!} t^{k(n-k)}.
\ee
\end{thm}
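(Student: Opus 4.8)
The plan has two parts. The structural claims --- that $\Pi\eta(t)\in\T_\l$ for $t>0$, that $M(t)=g_\l(\Pi\eta(t))$ solves the Toda flow on $\M_\l$, and that all conclusions of Theorem~\ref{main} persist --- I would deduce from Theorem~\ref{main} by a continuity argument. The explicit formulas I would obtain by solving the linear equation $\dot b=\epsilon_\l b$ and then performing elementary determinant manipulations.

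For the structural part, $b(t)=e^{t\epsilon_\l}$ lies in $\P$ for each $t>0$ by Proposition~\ref{kmg}, so $X(t)=f(b(t))=\Pi\eta(t)$ is a genuine path in $\T$ satisfying \eqref{dyn-rsk} for $t>0$, while $b(t)\to I\notin\P$ as $t\downarrow 0$, which is the source of the singularity at $t=0$. To see that $X(t)\in\T_\l$, recall $\P_\l=f^{-1}(\T_\l)$; it is closed in $\P$ (since $\T_\l$ is cut out of $\T$ by the continuous equations \eqref{cp}), and since $S^\l_t$ preserves $\T_\l$ (Proposition~\ref{stable}) and $S^\l_t=f\circ R^\l_t\circ f^{-1}$ with $R^\l_t\,b=e^{t\epsilon_\l}b$, the set $\P_\l$ is invariant under $b\mapsto e^{t\epsilon_\l}b$. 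Hence, if $I$ lies in the closure of $\P_\l$ and $b_N\in\P_\l$ with $b_N\to I$, then for each $t>0$ we have $e^{t\epsilon_\l}b_N\in\P_\l$ and $e^{t\epsilon_\l}b_N\to e^{t\epsilon_\l}\in\P$, so $b(t)=e^{t\epsilon_\l}\in\P_\l$, i.e.\ $X(t)\in\T_\l$. That $I\in\overline{\P_\l}$ is seen by taking $b_N=f^{-1}\big(X^*_\l(x^{(N)})\big)$ with the bottom-row entries spread far apart in increasing order, for instance $x^{(N)}_i=-N(n+1-2i)/2$ (so $\sum_i x^{(N)}_i=0$): a direct analysis of the critical-point equations \eqref{cp} in this regime shows that the remaining entries of $X^*_\l(x^{(N)})$ arrange themselves so that $b_N\to I$ entrywise, exactly as in the $n=2$ computation carried out in the Examples. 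Granting $X(t)\in\T_\l$ for $t>0$, all remaining assertions follow by applying Theorem~\ref{main} on $[t_0,\infty)$ with the regular initial datum $X(t_0)\in\T_\l$ for an arbitrary $t_0>0$: in particular $M(t)=g_\l(X(t))$ is an integral curve of the Toda vector field on $\M_\l$ for $t>0$, with $q_i(t)=e^{x^n_{i+1}(t)-x^n_i(t)}\to\infty$ as $t\downarrow 0$, and part~(ii) and formulas \eqref{giv1}, \eqref{giv2} of Theorem~\ref{main} hold for $t>0$.

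For the explicit formulas, solving $\dot b=\epsilon_\l b$ with $b(0)=I$ gives $b(t)=e^{t\epsilon_\l}$, and entrywise $\dot b_{ij}=\l_i b_{ij}+b_{i+1,j}$ with $b_{ij}(0)=\delta_{ij}$ (and $b_{n+1,\cdot}\equiv 0$). The contour integral $\beta_{ij}(t)=\frac1{2\pi\iota}\oint e^{tz}\,dz\big/\prod_{k=i}^{j}(z-\l_k)$ over a circle enclosing $\l_i,\dots,\l_j$ solves the same scalar system --- the identity $\l_i\beta_{ij}+\beta_{i+1,j}=\dot\beta_{ij}$ follows from $\l_i+(z-\l_i)=z$ under the integral sign, and $\beta_{ij}(0)=0$ for $i<j$ because the integrand is then $O(z^{-2})$ --- so by uniqueness $b_{ij}=\beta_{ij}$; computing residues yields the first line of \eqref{f-b}, and writing this divided difference of $e^{t(\cdot)}$ as a ratio of Vandermonde-type determinants gives \eqref{alt}. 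Formula \eqref{f-tau} is simply \eqref{det1} applied to $b(t)$: $\tau_k=e^{x^n_1+\cdots+x^n_k}=\Delta^n_k(b(t))$ is the displayed minor, and $\tau_1=b_{1n}$. For the Hankel form \eqref{f1-tau} I would use the elementary relations $b_{i,j-1}=(\partial_t-\l_j)b_{ij}$ and $b_{i+1,j}=(\partial_t-\l_i)b_{ij}$, both read off $\beta_{ij}$: column operations replace the $j$-th column of the minor $\Delta^n_k(b(t))$ by $\partial_t^{\,n-j}$ of its last column, and then row operations replace the $i$-th row by $\partial_t^{\,i-1}$ of the (now derivative-only) first row, turning $\Delta^n_k(b(t))$ into the Hankel determinant \eqref{f1-tau}. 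The matrix-integral identity \eqref{mi-tau} follows from the same circle of ideas: the Weyl integration formula turns $\int_{U(k)}$ into a $k$-fold contour integral over the unit circle (legitimate since $|\l_i|<1$), which by the Andreief/Heine identity is a Hankel determinant of the moments $\frac1{2\pi\iota}\oint z^r e^{tz}\,dz\big/\prod_l(z-\l_l)=\tau^{(r)}(t)$, namely \eqref{f1-tau}. Finally, for $\l=0$ one has $b_{ij}(t)=t^{j-i}/(j-i)!$ (with $1/r!:=0$ for $r<0$), so $\tau_k=\det\big[t^{j-i}/(j-i)!\big]$ over $1\le i\le k$, $n-k+1\le j\le n$; every nonzero term of the expansion carries the power $t^{k(n-k)}$, and after factoring it out one evaluates $\det\big[1/(j-i)!\big]$ by writing $1/(j-i)!=(j-1)(j-2)\cdots(j-i+1)/(j-1)!$, extracting $1/(j-1)!$ from column $j$, and computing the residual Vandermonde determinant $\det\big[j^{\,i-1}\big]_{1\le i\le k,\ n-k+1\le j\le n}$; this gives \eqref{f2-tau}.

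The main obstacle is the first step --- showing that $\Pi\eta(t)$ enters $\T_\l$ in spite of the singular initial data. The continuity argument reduces this to the single claim $I\in\overline{\P_\l}$, i.e.\ that $\T_\l$ contains triangles whose $f^{-1}$-images converge to the identity; this is transparent for $n=2$, but the asymptotic analysis of the system \eqref{cp} required for general $n$ is the genuinely delicate point. A possible alternative is to rewrite \eqref{cp} as bilinear (Hirota-type) relations among the minors $\Delta^m_k(b)$ and verify these directly for $b=e^{t\epsilon_\l}$ from Pl\"ucker / Desnanot--Jacobi identities for minors of a matrix exponential; but bookkeeping the $\l$-dependent terms in that reformulation is itself not entirely routine.
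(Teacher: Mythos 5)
Your overall strategy for the structural part coincides with the paper's: approximate the singular initial condition by genuine elements of $\T_\l$ and pass to the limit. But the one step you explicitly defer --- producing $b_N=f^{-1}(X^*_\l(x^{(N)}))\in\P_\l$ with $b_N\to I$, equivalently producing $\xi(N)\in\T_\l$ whose boundary data degenerate appropriately --- is exactly where the content lies, and your proposal does not supply it. The paper fills this gap with a short scaling argument: take $\xi(N)=X^*_\l(-N\rho^n)$ with $\rho^m=(m-1,m-3,\dots,1-m)$ and set $\tilde x^m=x^m+N\rho^m$; every exponent appearing in $\F$ then shifts by exactly $N$, so $\F_\l(X)=e^N\F_{e^{-N}\l}(\tilde X)$, whence $\tilde X=X^*_{e^{-N}\l}(0)\to X^*_0(0)$ by the continuity of $\l\mapsto X^*_\l(0)$ recorded in Section~\ref{s-wfqt}. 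This yields $x^m_i\sim-N\rho^m_i$, from which the required divergences ($r^m_k(N)\sim N(k-1)\to+\infty$ in the paper's formulation) are immediate. Without some such argument, your assertion that ``the remaining entries arrange themselves so that $b_N\to I$'' is unsupported for $n>2$. Note also that your formulation asks for entrywise convergence of $b_N$, which needs extra bookkeeping (the coordinates $w^m_i$ of $b_N$ in Proposition~\ref{ny} actually blow up; only the minors $\Delta^m_k(b_N)$ behave cleanly), whereas the paper works at the level of the maps $\Pi^{\xi(N)}$, where only $r^m_k(N)\to+\infty$ and the vanishing of the row-sum shifts are needed. The remainder of your continuity argument (closedness of $\P_\l$ in $\P$, invariance under $R^\l_t$ via Proposition~\ref{stable}, then applying Theorem~\ref{main} from an arbitrary $t_0>0$) is sound and matches the paper.

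For the explicit formulas your routes are correct and in places more direct than the paper's. For \eqref{f-b} you check that the contour integral satisfies $\dot\beta_{ij}=\l_i\beta_{ij}+\beta_{i+1,j}$ with the right initial data and invoke uniqueness, where the paper integrates by induction and uses a partial-fraction identity. For \eqref{f1-tau} your row and column reduction of $\Delta^n_k(b)$ using $b_{i,j-1}=(\partial_t-\l_j)b_{ij}$ and $b_{i+1,j}=(\partial_t-\l_i)b_{ij}$ is a clean direct proof; the paper instead shows that both families of tau functions satisfy the bilinear recursion $(\log\tau_k)''=-\tau_{k+1}\tau_{k-1}/\tau_k^2$ and concludes by induction. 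For \eqref{f2-tau} your Vandermonde factorisation replaces the paper's appeal to the Gessel--Viennot evaluation of $\det\bigl[\binom{n-i}{k-j}\bigr]$. The paper gives no proof of \eqref{mi-tau} (it is the standard Heine/Andreief identity), so your sketch there is at least as complete.
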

\begin{proof}
Let  $\xi(N)=X^*_\l(-N\rho^n)$, where $\rho^n=(n-1,n-3,\ldots,1-n)$, and set $X^N(t)=S^\l_t\xi(N)$.
By Propositions \ref{S-extend} and \ref{stable}, $X^N(t)=\Pi^{\xi(N)}\eta(t)\in\T_\l$ for each $t>0$.

For $2\le k\le m\le n$, define 
$$r^m_k(N)  = \sum_{i=1}^{k-1} \xi^{m-1}_i(N) - \sum_{i=1}^{k-1} \xi^{m}_i(N).$$
We will show that, as $N\to\infty$, $r^m_i(N)\to+\infty$ for each $2\le i\le m\le n$. 
Note that this implies that $\Pi^{\xi(N)}(\eta)(t)$ converges to $\Pi(\eta)(t)$ for each $t>0$, and hence
that $\Pi(\eta)(t)\in\T_\l$ for each $t>0$.  But $\Pi(\eta)(t)$ also evolves according to \eqref{dyn-rsk} for 
$t>0$, so by Theorem~\ref{main}, $g_\l(\Pi\eta(t)),\ t>0$ defines a solution to the Toda flow on $\M_\l$,
as claimed.  For convenience, write $X^N(0)=X=(x^m_i)$.
For $1\le m\le n$, define $\tilde X=(\tilde x^m_i)\in\T$ by $\tilde x^m=x^m+N\rho^m$.
Note that $\tilde x^n=0$.  Then $\F_\l(X)=e^N\F_{e^{-N}\l}(\tilde X)$ and $\tilde X=X^*_{e^{-N}\l}(0)$.
Thus, as $N\to\infty$, $\tilde X\to X^*_0(0)$.  
It follows that, as $N\to\infty$, for each $1\le i\le m\le n$, $x^m_i\sim-N\rho^m_i$ and 
hence $r^m_k(N)\sim N(k-1)\to\infty$, as required.

We verify the formula \eqref{f-b}
by induction.  Without loss of generality, we only need to show that the formula holds for $b_{1n}$,
$n\ge 2$.  As $b(t)$ is a continuous function of $\l$ we can also
assume, for convenience, that the $\l_i$ are distinct.  Write $b_n(t)=b_{1n}(t)$.  
First we note that
$$b_2(t)=\int_0^t e^{\l_2 s+\l_1(t-s)} ds =\frac1{\l_1-\l_2}(e^{\l_1 t}-e^{\l_2 t}).$$
Now assume that, for each $t>0$,
$$b_{n-1}(t)=\sum_{1\le k\le n-1} \left[ \prod_{l=1,\ldots , n-1;\ l\ne k} (\l_k-\l_l)^{-1}\right] e^{\l_k t}.$$
From the definition \eqref{def-b}, we can write
\begin{eqnarray*}
b_{1n}(t) &=& \int_0^t b_{n-1}(s) e^{\l_n (t-s)}  ds\\
&=& \sum_{k=1}^{n-1} \left[ \prod_{l=1,\ldots , n-1;\ l\ne k} (\l_k-\l_l)^{-1}\right] 
\left[ \int_0^t e^{(\l_k-\l_n) s} ds \right] e^{\l_n t}\\
&=& \sum_{k=1}^{n-1} \left[ \prod_{l=1,\ldots , n;\ l\ne k} (\l_k-\l_l)^{-1}\right] 
\left[ e^{(\l_k-\l_n) t} -1 \right] e^{\l_n t}\\
&=&  \sum_{k=1}^{n-1} \left[ \prod_{l=1,\ldots , n;\ l\ne k} (\l_k-\l_l)^{-1}\right] 
\left[ e^{\l_kt}-e^{\l_n t}  \right] .
\end{eqnarray*}
It therefore suffices to show that
$$\sum_{k=1}^{n-1} \left[ \prod_{l=1,\ldots , n;\ l\ne k} (\l_k-\l_l)^{-1}\right] = - \prod_{1\le k\le n-1} (\l_n-\l_k)^{-1}$$
or, equivalently,
\be\label{wts}
\sum_{k=1}^n \left[ \prod_{l=1,\ldots , n;\ l\ne k} (\l_k-\l_l)^{-1}\right]=0.
\ee
To see that this holds, denote 
$$\Delta_m(a_1,\ldots,a_m)=\prod_{1\le i<j \le m} (a_i-a_j)=\det\left[ a_i^{n-j} \right]_{i,j=1,\ldots,m}$$ 
and note that
\begin{align*}
 \Delta_n(\l) \sum_{k=1}^n  \left[ \prod_{l=1,\ldots , n;\ l\ne k}  (\l_k-\l_l)^{-1}\right] 
&= \sum_{i=1}^n (-1)^{i-1} \Delta_{n-1}(\l_1,\ldots,\widehat{\l_i},\ldots,\l_n) \\
&= \begin{vmatrix} 1&\l_1^{n-2}&\dots&\l_1&1\\
1&\l_2^{n-2}&\dots&\l_2&1\\
&&\vdots&&\\
1&\l_n^{n-2}&\dots&\l_n&1\end{vmatrix} =0,
\end{align*}
which implies \eqref{wts}.  Note that essentially the same calculation yields the alternative formula \eqref{alt}.

Set $\tau_1=b_{1n}$ and, for $2\le k\le n$,
$$\tau_k=\det \left[ b_{ij}\right]_{\ 1\le i\le k,\ m-k+1\le j\le m}.$$
Then
$$x^n_1+\cdots+x^n_k=\log\tau_k,\qquad 1\le k\le n,$$
and since $x^n$ satisfies the Toda equations \eqref{toda1}, this implies
$$(\log\tau_k)''=-\frac{\tau_{k+1}\tau_{k-1}}{\tau_k^2}$$
for each $1\le k\le n$ with the conventions $\tau_0=1$ and $\tau_{n+1}=0$.

On the other hand, the tau functions $\tilde \tau_k$ defined by $\tilde\tau_0=1$,
$\tilde\tau_1=\tau_1$,
$$\tilde\tau_k = \det\left[\tau_1^{(k+i-j-1)} \right]_{1\le i,j\le k},\qquad k\ge 2$$
also satisfy
$$(\log\tilde \tau_k)''=-\frac{\tilde\tau_{k+1}\tilde\tau_{k-1}}{\tilde\tau_k^2}.$$
This is well-known and is easily verified using basic properties of Wronskians
and Sylvester's identity.  It follows, by induction, that $\tilde\tau_k=\tau_k$ for $1\le k\le n$, 
and in fact $\tilde\tau_k=0$ for $k>n$.

To obtain the last formula \eqref{f2-tau}, note that when $\l=0$, $b_{ij}(t)=t^{j-i}/(j-i)!$ for $i<j$, hence
\begin{eqnarray*}
\tau_k &=& \det\left[\frac{t^{n-k+j-i}}{(n-k+j-i)!} 1_{n-k+j-i\ge 0} \right]_{i,j=1,\ldots,k} \\
&=& \frac{(k-1)!(k-2)!\ldots 1}{(n-1)!(n-2)!\ldots(n-k)!} t^{k(n-k)} 
\det\left[\binom{n-i}{k-j} \right]_{i,j=1,\ldots,k} ,
\end{eqnarray*}
with the convention that $\binom{a}{b}=0$ if $b>a$.  
Now, by a theorem of Gessel and Viennot~\cite{gv},
$$\det\left[\binom{n-i}{k-j} \right]_{i,j=1,\ldots,k}=1,$$
so we are done.
\end{proof}

\begin{ex}\label{exs}
Suppose $n=2$ and $\l=(\l,-\l)$. Then
$$x(t)=\left(\log\left[\frac1{\l}\sinh(\l t)\right],-\log\left[\frac1\l\sinh(\l t)\right]\right).$$
This yields the solution
$$\xi(t)=\left(\log\left[\frac{\iota}\l \sin(\l t)\right],-\log\left[\frac{\iota}\l \sin(\l t)\right]\right)$$
of the usual Toda lattice.  Note that
$$e^{\xi_2-\xi_1}=-\frac{\l^2}{\sin^2(\l t)}.$$
We can also take $\l$ to purely imaginary, $\l=\iota\gamma$ say, where $\gamma\in\R$.  
Then 
$$e^{\xi_2-\xi_1}=-\frac{\gamma^2}{\sinh^2(\gamma t)}.$$
This particular singular solution of the usual Toda lattice is discussed, for example, in~\cite{ck}.
When $\l=0$, 
$$x(t)=(\log t,-\log t),\qquad \xi(t)=(\iota\pi/2+\log t,-\iota\pi/2-\log t),$$
and $$e^{\xi_2-\xi_1} = -1/t^2.$$
\end{ex}

\end{document}